\documentclass{article}

\usepackage{arxiv}

\usepackage[T1]{fontenc}    % use 8-bit T1 fonts
\usepackage{hyperref}       % hyperlinks
\usepackage{url}            % simple URL typesetting
\usepackage{booktabs}       % professional-quality tables
\usepackage{amsfonts}       % blackboard math symbols
\usepackage{nicefrac}       % compact symbols for 1/2, etc.
\usepackage{microtype}      % microtypography
\usepackage{amsmath}
\usepackage{amssymb}
\usepackage{amsmath, amsthm}
\usepackage{graphicx}
\usepackage[colorinlistoftodos]{todonotes}
\usepackage{datetime}
\theoremstyle{definition}
\newtheorem{definition}{Definition}[section]
\newtheorem{theorem}{Theorem}[section]
\newtheorem{lemma}[theorem]{Lemma}
\newtheorem{proposition}[theorem]{Proposition}
\newtheorem{corollary}[theorem]{Corollary}
\newtheorem{example}[theorem]{Example}
\newtheorem{remark}[theorem]{Remark}

\newcommand\norm[1]{\left\lVert#1\right\rVert}
\newcommand{\turnnw}[1]{\rotatebox[origin=c]{270}{\ensuremath#1}}

\title{Characterization of the existence of 
semicontinuous Richter-Peleg multi-utility representations}

\author{
  Athanasios Andrikopoulos\thanks{Associate professor  (https://www.ceid.upatras.gr/webpages/faculty/aandriko/)} \\
  Dept. of Computer Engineering and Informatics\\
  University of Patras\\
  Patras, 26504, Greece \\
  \texttt{aandriko@ceid.upatras.gr} \\
  %% examples of more authors
   %\And
 %Ioannis Gounaridis \\
  %Dept. of Computer Engineering and Informatics\\
  %University of Patras\\
 % Patras, 26504, Greece \\
 % \texttt{igounaridis@upatras.gr} \\
  %% \AND
  %% Coauthor \\
  %% Affiliation \\
  %% Address \\
  %% \texttt{email} \\
  %% \And
  %% Coauthor \\
  %% Affiliation \\
  %% Address \\
  %% \texttt{email} \\
  %% \And
  %% Coauthor \\
  %% Affiliation \\
  %% Address \\
  %% \texttt{email} \\
}

\begin{document}
\maketitle

\begin{abstract}
The utility representations of preference relations
in symmetric topological spaces have the advantage of fully characterizing these relations.
But this is not true in the case of representations of preference relations that are mostly incomplete and use asymmetric topologies.
To avoid this unfortunate circumstance, due to lack of symmetry,
the notion of semicontinuous Richter-Peleg multi-utility representation was first introduced and studied by Minguzzi \cite{min}.
Generally, the study of semicontinuous functions reveals that topology and order are two aspects of the same mathematical object
and therefore should be studied jointly. 
The mathematical object that can be used to model and analyze the concepts of asymmetry and duality that arise from the coexistence of topology and order is that of bitopological preordered space.
In this paper, we characterize the existence of 
semicontinuous Richter-Peleg multi-utility representations in bitopological spaces.
Based on this characterization, we prove that 
a preorder $\precsim$ has the set of all Scott and $\omega$-continuous functions as a
Richter-Peleg multi-utility representation if and only if $\precsim$ is precontinuous
in the sense of Ern\'{e} \cite{ern2}.
\end{abstract}

% keywords can be removed
\keywords{Bitopological space \and Richter-Peleg representation \and Richter-Peleg multi-utility representation \and Scott topology, Lower topology \and MacNeille completion \and Precontinuous
preorder}

\section{Introduction} 
 Traditionally, a preference relation on a nonempty choice set $X$ 
is  a reflexive,
 transitive and complete binary relation $\precsim$, and a utility function $u: X\to \mathbb{R}$ {\it represents} $\precsim$
if it satisfies the
property that $x\precsim y$ if and only if $u(x)\precsim u(y)$.
By utilizing topology, utility representations make it easier to find the best choices.
With such a utility representation, preference relations are complete.
Two classical results on representations of binary relations are those of Eilenberg \cite{eil} and Debreu \cite{deb1}, \cite{deb2}. The Eilenberg utility representation theorem states that every continuous complete preorder on a separable connected topological space is representable by a utility function.
The Debreu utility representation theorem states that every continuous complete preorder on a second countable topological space has
a continuous utility representation.
In 1971 Arrow and Hahn \cite[Theorem 1]{AH}
introduced a method for representing a preference relation by a continuous utility function
on a subset of Euclidean space that satisfies certain conditions.
Since then, many representation theorems 
of binary relations have been established. 
Rader \cite{rad} and Debreu \cite{deb2} considered the existence of semicontinuous utilities for complete and transitive orderings, Jaffray \cite{Jaf} considered partial orderings, and Peris and Subiza \cite{PS} considered acyclic binary relations.
Richter provides alternate proof of Rander's result.
As stated previously, such a utility representation provides us complete information about the binary relation while also allowing us to work with real functions.
However, if the preference relation $\precsim$ is not complete, such a representation does not exist and only weak representations can be found.
That is, there is a real
utility function $u$ in which
$x\sim y$ implies $f(x)=f(y)$ and
$x\prec y$ implies $u(x)<u(y)$ \cite[Page 146]{min}.
This notion of utility representation, which was first used by Aumann \cite[Page 447]{aum}
in the cardinal framework of preferences over objective lotteries,
does not provide complete information about the preference relation. 
Indeed, if we observe
that $u(x)\geq u(y)$, we only know that it is not the case that $x\prec y$, but we do not
know whether $y\prec x$ holds or not. 
Nonetheless, it is useful because every element that maximizes this function is a maximal element, and thus the existence of maxima of this function ensures the existence of maximal elements.
For separable and spacious strict partial orders, Peleg \cite{pel} obtains continuous and semicontinuous weak utility functions. 
Peris and Subiza \cite{PS} also provide new sufficient conditions for
the existence of semicontinuous weak utilities for acyclic binary relations
and
Alcantud and Rodriguez-Palmero \cite{AR} follow a
different approach and provide necessary and sufficient conditions for the existence
of semicontinuous weak utilities.
Since Peleg \cite{pel} and Richter \cite{ric} were the first to look into the idea of weak representation,
is often referred to as the {\it Richter-Peleg utility representation}.
Another approach for representing incomplete binary relations is to find a collection $\mathcal{V}$ of (continuous) increasing functions on a decision space $X$ 
that characterizes the preference relation 
in the sense that $x\precsim y$ for any $x, y\in X$
if and only if $u(x)\precsim u(y)$ for all $u\in \mathcal{V}$.
The advantage of this approach, known as the {\it {\rm (}continuous{\rm )} multi-utility representation} of $\precsim$, is that it fully characterizes the preference relation (see \cite{BH1}, \cite{EO}).
As demonstrated by Bewley \cite{bew} and Dubra et al. \cite{DMO},
this kind of representation arises intrinsically from the classical frameworks of Anscombe-Aumann, Savage, and Von Neumann-Morgenstern, respectively, upon relaxation of the completeness axiom.
A {\it Richter-Peleg multi-utility representation} $\mathcal{V}$ for an incomplete binary relation 
$\precsim$ on $X$ is a multi-utility representation for $\precsim$ in which
every function
$v\in \mathcal{V}$
is a Richter-Peleg utility for $\precsim$, i.e., 
$x\precsim y$ implies that $v(x)\precsim v(y)$ and $x\prec y$ implies that $v(x)<v(y)$.
If $\precsim$ is complete, then any Richter-Peleg representation $\mathcal{V}$ of $\precsim$ is the standard utility representation, that is, $x\precsim y$ for each
$x, y\in X$ if and only if $u(x)\precsim u(y)$.
The theory of the (continuous) multi-utility representation in its modern form was first investigated by 
Ok \cite{ok}, Mandler \cite{man} and 
Evren and OK \cite{EO}.
Continuing the work of Evren and Ok \cite{EO},
Bosi and Herden \cite[Theorem 3.4]{BH1}
show that if $\precsim$ is a weakly continuous\footnote{A preorder $\precsim$ on a topological space $(X,\tau)$ is said to be {\it weakly continuous}  (see Bosi and Herden, \cite{BH3}, \cite{BH4})
if for every $x, y\in X$ with $x\prec y$ there exists a continuous increasing real-valued function $f_{xy}$ on $(X,\tau)$ such that $f_{xy}(x) < f_{xy}(y)$.} preorder in a second countable topological space $(X,\tau)$, then $\precsim$ has a continuous multi-utility representation if and only if it satisfies the continuous analogue of the Dushnik-Miller Theorem.
The notion of a Richter-Peleg multi-utility representation,
which refers to a multi-utility representation that consists of strictly increasing functions,
was introduced by Minguzzi \cite[Sec. 5]{min}.
Recently,
Alcantud, Bosi and Zuanon 
\cite{ABZ} as well as
Bosi, Estevan and Ravent\'{o}s-Pujol
\cite{BER}, investigated the existence of (semicontinuous) Richter-Peleg multi-utilities.
The main result in \cite{ABZ} basically states that:
A preorder $\precsim$ on (a topological space $(X,\tau)$) a set $X$
is represented by a (upper/lower semicontinuous) Richter-Peleg multi-utility
if and only if admits a (upper/lower semicontinuous)
Richter-Peleg representation. On the other hand, in \cite{BER} the authors
studied
necessary and sufficient topologies for the existence of a semicontinuous and finite 
Richter-Peleg multi-utility representation for a preorder.

In this paper, we use a bitopological approach  to characterize the existence of semicontinuous Richter-Peleg multi-utility representations. In fact, we use joincompact spaces, which are the bitopological analogues of compact Hausdorff spaces and naturally occur as Lawson-closed subsets of continuous lattices with the Scott and lower topologies restrictions.
In light of this characterization, we demonstrate that a preorder 
$\precsim$ has the set of all Scott and $\omega$-continuous functions as
Richter-Peleg multi-utility representation if and only if $\precsim$ is precontinuous
in the sense of Ern\'{e} \cite{ern2}.

\section{Notations and definitions}

Let $X$ be a non-empty decision space and
$\precsim\subseteq X\times X$ be a binary relation on $X$. As usual, $\prec$ denotes the {\it strict part} of $\precsim$.
We sometimes
abbreviate $(x,y)\in \precsim$ (resp. $(x,y)\in \prec$) as $x\precsim y$ (resp. $x\prec y$).
We say that $\precsim$ on $X$ is (i)
{\it reflexive} if for each $x\in X$, $x\precsim x$; (ii)
{\it asymmetric} if for all $x, y\in X$, $x\precsim y$
 and $y \not\precsim x$;
(iii) {\it transitive} if for all $x,y,z\in X$, [$x\precsim z$ 
and
$z\precsim y$ $\Rightarrow x\precsim y$; 
(iv)
{\it antisymmetric} if for each $x,y\in X$,
[$x\precsim y$ and
$y\precsim x] \Rightarrow x=y$; 
(vi) {\it complete} if
for all $x$ and $y$, $x\precsim y$ or $y\precsim x$; 
(v) {\it total} if for each $x,y\in X$,
$x\neq y$ we have $x\precsim y$ or $y\precsim x$.
The following combination of properties are considered in the next
theorems. A binary relation $\precsim$ on $X$ is:  (1) {\it A preorder} if
$\precsim$ is reflexive and transitive;
(2) {\it A partial order} if
$\precsim $ is reflexive, transitive and antisymmetric;
(3) {\it A linear order} if
$\precsim$ is a total partial order.
A {\it preordered set} is a pair $(X,\precsim)$ consisting of a set $X$ and a preorder $\precsim$ on $X$.
Let $\mathcal{L}=(X,\precsim)$ be a preordered set.
If $\precsim$ is an order, then $\mathcal{L}$ is called {\it partially ordered set} or {\it poset}.
A {\it lattice} is a mathematical structure studied in the mathematical subdisciplines of order theory and abstract algebra, among others.
It consists of a poset $(X,\precsim)$ in which every pair of elements $x$ and $y$
has a unique supremum $x\vee y$ (also called a least upper bound or join) and a unique 
infimum  $x\wedge y$ (also called a greatest lower bound or meet).
A subspace $(A,\precsim)$ of a lattice $(X,\precsim)$ is called a {\it sublattice} of $(X,\precsim)$ if it becomes a 
lattice with respect to $\precsim$.
If $\precsim$ is a preorder on $X$, then we denote the associated asymmetric relation $\prec$
and the associated equivalence relation $\sim$, respectively, by 
[$x\prec y\Leftrightarrow (x\precsim y)\wedge (y\not\precsim x)]$ and 
[$x\sim y\Leftrightarrow (x\precsim y)\wedge (y\precsim x)]$. 
We recall that $f: (X,\precsim)\to (\mathbb{R},\leq)$
is: (i) {\it increasing} if, for each $x,y\in X$, $x\precsim y$ implies $f(x)\leq f(y)$;
(ii) {\it order preserving} if for all $x ,y\in X$,
 $x\prec y\Rightarrow f(x)<f(y)$].
A {\it Richter-Peleg utility} is an increasing function $f: (X,\precsim)\to (\mathbb{R},\leq)$ 
that is also an order-preserving function
(see e.g. Peleg \cite{pel} and Richter \cite{ric0}). 
Equivalently, if $\sim$ implies $f(x)=f(y)$ and it is order preserving.
We say that a preorder $\precsim$ admits
a {\it Richter-Peleg multi-utility representation} by a family of functions $\mathcal{V}$ if
$x\precsim y \Leftrightarrow  f(x)\leq f(y)$ for all $f\in\mathcal{V}$. 
It is obvious that a Richter-Peleg multi-utility representation $\mathcal{V}$ of a preordered set
$(X,\precsim)$
characterizes the strict part $\prec$ of $\precsim$, in the sense that for each $x,y \in X$,
$x\prec y \Leftrightarrow  f(x)<f(y)$ for all $f\in\mathcal{V}$.

\par
Let $(X,\tau)$ be a topological space. 
We say that $(X,\tau)$
is a: ($\mathfrak{i}$) $T_{_0}$ topological space if given two distinct points in it, there exists an open neighborbood of it that contains exactly one of them;
($\mathfrak{ii}$) $T_{_2}$ or {\it Hausdorff} topological space
if its distinct points are contained in disjoint neighborhoods and ($\mathfrak{iii}$)
{\it quasi-compact} if for each collection of open sets which covers $X$ there exists a finite subcollection that also covers $X$.
If it is quasi-compact and Hausdorff, it is called {\it compact}. 
A function $f$ in $(X,\tau)$ is {\it upper} (resp., {\it lower})  
 {\it semicontinuous}
at $x\in X$ if for each $\varepsilon>0$,
there exists a neighborhood $U_{_x}$ of $x$ 
such that for all $y\in U_{_x}$ we have $f(y)<f(x)+\varepsilon$ (resp. $f(y)>f(x)-\varepsilon$).
An {\it isomorphism} is a structure-preserving mapping $f$ that can be reversed by $f^{-1}$ between two structures of the same type.
If there is an isomorphism between two mathematical structures, they are said to be {\it isomorphic}.
If $A\subseteq X$, then is the {\it interior} (resp. {\it closure}) of $A$ with respect to $\tau$ topology is denoted by 
$int_{_{\tau}}A$ (resp. $cl_{_{\tau}}A$).
Every topology $\tau$ on a set $X$ induces a preorder $\precsim_{_\tau}$ on this set, called {\it specialization preorder}, as follows:
$x\precsim_{_\tau} y$ if and only if $x\in cl_{_\tau}y$, $x, y\in X$.
This preorder is an order if and only if $(X,\tau)$ is $T_{_0}$. The {\it compatible topologies}
on a preordered set are those which induce the given preorder.
Nachbin \cite{nac} defines a {\it topological preordered space} $(X,\tau,\precsim)$ as a topological space $(X,\tau)$ with a preorder $\precsim$ that is closed as a subset of $X\times X$.
If $(X,\tau)$ is Hausdorff, then $\precsim$ is a partial order and $(X,\tau,\precsim)$ 
is called a {\it topological ordered space}.
For any $A\subseteq X$, define $\displaystyle\uparrow A=\{y\in X\vert x\precsim y\ {\rm for\ some}\ x\in A\}$.
We also write $\displaystyle\uparrow \{x\}$ as $\displaystyle\uparrow x$.
The sets $\displaystyle\downarrow A$ as $\displaystyle\downarrow x$
are defined dually.
A subset $A$ of $X$ is said to be an increasing (resp. decreasing) set, or to be an upper (lower) set,
if
$A=\displaystyle\uparrow A$ (resp. $A=\displaystyle\downarrow A$).
In what follows, we use two names for the sets in the preceding definition: "increasing (resp. decreasing) set" as in general topology, or "upper (lower) set" as in order theory.
A subset $A$ of $X$ is increasing if and only if $X\setminus A$ is decreasing. For each subset $A$ of $X$ 
there is a smallest increasing set $i(A)$ (decreasing set $d(A)$) that contains $A$.
If $A=\{a\}$ for some $a\in X$, then the notation is $i(a)$ (resp. $d(a)$).
A set $X$ equipped with two topologies $\tau_{_1}$ and $\tau_{_2}$
is called a {\it bitopological space}.
The concept of bitopological spaces was first introduced by Kelly \cite{kel}
in order to generalize the notion of topological space.
Every bitopological space $\mathfrak{D}=(X,\tau_{_1},\tau_{_2})$
can be regarded as a topological space 
$(X,\tau)$ with $\tau_{_1}=\tau_{_2}=\tau$. 
The dual of $\mathfrak{D}$ is the bitopological space
$\mathfrak{D}^{\ast}=(X,\tau_{_2},\tau_{_1})$.
A bitopological space $\mathfrak{D}=(X,\tau_{_1},\tau_{_2})$ is called:
(i) {\it  Pairwise Hausdorff} if for each two points $x$ and $y$ in $X$, there exists$\tau_{_i}$-open neighborhood of $x$
and $\tau_{_j}$-open neighborhood $V_j$ of $y$
such that $U_i\cap V_j=\emptyset$, \  $i, j\in \{1,2\}, i\neq j$.
(ii) {\it Pairwise normal} if for every pair of a $\tau_{_1}$-closed set $F$ and a $\tau_{_2}$-closed set $G$ with 
$F\cap G=\emptyset$, there exist a $\tau_{_1}$-open set $U$ and a $\tau_{_2}$-open set $V$
such that $F\subset V$, $G\subset U$ and $V\cap U=\emptyset$.
A bitopological ordered space $\mathfrak{X}=(X,\tau_{_1},\tau_{_2},\precsim)$ is called
{\it Pairwise normally ordered} if for every pair of a decreasing $\tau_{_1}$-closed set $F$ and an increasing $\tau_{_2}$-closed set $G$ with 
$F\cap G=\emptyset$, there exist an increasing $\tau_{_1}$-open set $U$ and a decreasing $\tau_{_2}$-open set $V$
such that $F\subset V$, $G\subset U$ and $V\cap U=\emptyset$.

\section{Bitopological preordered spaces}

A {\it bitopological preordered space} 
$\mathfrak{X}=(X,\tau_{_1},\tau_{_2},\precsim)$ (see \cite[Definition 1.4]{and}) is a bitopological space 
$\mathfrak{D}=(X,\tau_{_1},\tau_{_2})$
equipped with a $\tau_{_1}\times \tau_{_2}$-closed preorder $\precsim$ in $X\times X$. 
The definition of $\mathfrak{X}$ allows for the definition of the dual bitopological preordered space $\mathfrak{X}^{\ast}=(X,\tau_{_2},\tau_{_1},\precsim^{\ast})$
of $\mathfrak{X}$, where $\precsim^{\ast}=(\precsim)^{-1}$.
If $\mathfrak{X}$ is pairwise Hausdorff, then
$(X,\tau_{_1}\vee\tau_{_2})$
is Hausdorff and $\precsim$ is a partial order ($\precsim$ is antisymmetric). In this case, $\mathfrak{X}$ is called a {\it bitopological ordered space}.
This definition extends the notion of topological ordered space\footnote{Acording to Nachbin, a {\it topological preordered space} $(X,\tau,\precsim)$ is a topological space $(X,\tau)$ equipped with a 
preorder $\precsim$ which is $\tau\times \tau$-closed subset of $X\times X$.
If $(X,\tau)$ is Hausdorff, then $\precsim$ is a partial order and $(X,\tau,\precsim)$ is called a {\it topological ordered space}.}
of Nachbin to the bitopological spaces.
Every bitopological ordered space $\mathfrak{X}$ can be thought of as a
bitopological space $\mathfrak{D}=(X,\tau_{_1},\tau_{_2})$, where $\precsim$ is the equality relation 
$\Delta=\{(x,x)\vert x\in X\}$.
Each member of $\tau_{_1}$ (resp. $\tau_{_2}$) in 
$\mathfrak{X}$
is called an {\it open}
set, or a $\tau_{_1}$-{\it open} (resp. $\tau_{_2}$-{\it open}) set if one wishes to emphasize the topologies $\tau_{_1}$ (resp. $\tau_{_2}$) on $X$. 
This notion comes in handy when addressing many topologies on a given $X$ at the same time.

\begin{definition}\label{osr}{\rm Let $\mathfrak{X}=(X,\tau_{_1},\tau_{_2},\precsim)$ 
be a bitopological preordered space. A subset $N$ of $X$ is a 
$\tau_{_1}$-neighborhood (resp. $\tau_{_2}$-neighborhood) of an $x\in X$
if and only if $N$ includes a $\tau_{_1}$-open (resp. $\tau_{_2}$-open) set
containing $x$. If $N$ is a 
$\tau_{_1}$-open (resp. $\tau_{_2}$-open) set then it is called
$\tau_{_1}$-open neighborhood (resp. $\tau_{_2}$-open neighborhood) of $x$.
}
\end{definition}

\begin{example}\label{ex0}{\rm Let $\mathfrak{I}=(\mathbb{R},\mathfrak{U},\mathfrak{L},\precsim)$,
where $\mathfrak{U}$, $\mathfrak{L}$ are the upper and lower topologies:
$\{(a,\infty)\vert a\in \mathbb{R}\}$, $\{ (-\infty,a)\vert \ a\in \mathbb{R}\}$ and 
$\precsim$ is the usual order in $\mathbb{R}$. Then it is easy to check that $\mathfrak{I}$ 
is a bitopological preordered space.
}
\end{example}

In what follows,
$\mathfrak{I}$ will denote the bitopological preordered space 
$(\mathbb{R},\mathfrak{U},\mathfrak{L},\precsim)$.

\begin{example}\label{ex1}{\rm Let $(X,\tau)$ be a topological space, $\tau^{\ast}$ be the discrete topology on $X$ and $\precsim_{_{\tau}}$ be the specialization order of $\tau$. Then,
$(X,\tau,\tau^{\ast},\precsim_{_{\tau}})$ is a bitopological preordered space, that is,  $\precsim_{_{\tau}}$ is 
$\tau\times \tau^{\ast}$-closed subset of $X\times X$. Indeed,
if 
$(a,b)\in X\times X\setminus \{(x,y)\in X\times X\vert \ x\precsim_{_{\tau}}y\}$, then there exists a $\tau$-open
neighborhood $V_{a}$ of $a$ such that 
$V_{a}\cap \{b\}=\emptyset$ or equivalently
$b\notin V_{_a}$. Then,
$V_{_a}\times \{b\}\subset X\times X\setminus \{(x,y)\in X\times X\vert \ x\precsim_{_{\tau}}y\}$.
Indeed, suppose to the contrary that there exists $z\in X$ such that
$(z,b)\in V_{_a}\times \{b\}$ and $(z,b)\notin X\times X\setminus \{(x,y)\in X\times X\vert \ x\precsim_{_{\tau}}y\}$. Then, $z\precsim_{_{\tau}}b$ which implies that $b\in \bigcap V_{z}$ ($V_{z}$ denotes
an arbitrary $\tau$-open neighborhood of $z$). Since $z\in V_a$ and $V_a$ is 
$\tau$-open, there exists a $\tau$-open neighborhood $V^{\ast}_{z}$ of $z$ such that
$V^{\ast}_{_z}\subset V_a$. But then, $b\in \bigcap V_{z}\subseteq V^{\ast}_{_z}\subset V_a$, a contradiction.
In light of this last contradiction, it can be concluded that
$\precsim_{_{\tau}}$ is $\tau\times \tau^{\ast}$-closed.
}
\end{example}

\begin{example}\label{ex2}{\rm Let $(X,d)$ be a quasi-pseudometric space\footnote{A quasi-pseudometric space $(X,d)$ is a set $X$ together with a non-negative real-valued function $d: X\times X\longrightarrow \mathbb{R}$ (called a quasi-pseudometric) such that, for every $x, y, z\in X$: (i) $d(x,x)=0$;
(ii) $d(x,y)\leq d(x,z)+d(z,y)$. A quasi-pseudometric $d$ on $X$ induces a topology $\tau_{d}$ on $X$ 
which has as a base the family of $d$-balls $\{ B_{_d}(x,r): x\in X, r>0\}$ where $B_{_d}(x,r)=\{y\in X: d(x,y)<r\}$.}.
Then, $(X,\tau_{_d},\tau_{_{d^{-1}}},\precsim_{_{\tau_{_d}}})$ is a bitopological preordered space.
To demonstrate this, we must show that $cl_{_{\tau_{_d}\times \tau_{_{d^{-1}}}}}G_{_{\tau_{_d}}}=G_{_{\tau_{_d}}}$,
where $G_{_{\tau_{_d}}}$ is the graph of $\precsim_{_{\tau_{_d}}}$ and $cl_{_{\tau_{_d}\times \tau_{_{d^{-1}}}}}G_{_{\tau_{_d}}}$
is the closure of $G_{_{\tau_{_d}}}$ in $(X\times X, \tau_{_d}\times \tau_{_{d^{-1}}})$. 
Indeed, let $(a,b)\in cl_{_{\tau_{_d}\times \tau_{_{d^{-1}}}}}G_{_{\tau_{_d}}}$. Then, for each $\varepsilon>0$, we have 
$B_{_d}(a,\frac{\varepsilon}{3})\times B_{_{d^{-1}}}(b,\frac{\varepsilon}{3})\cap G_{_{\tau_{_d}}}\neq \emptyset$. Therefore, there exists 
$(z_{_1},z_{_2})\in B_{_d}(a,\frac{\varepsilon}{3})\times B_{_{d^{-1}}}(b,\frac{\varepsilon}{3})$ and 
$(z_{_1},z_{_2})\in G_{_{\tau_{_d}}}$ or equivalently
$d(a,z_{_1})<\frac{\varepsilon}{3}$, $d(z_{_2},b)<\frac{\varepsilon}{3}$ and $d(z_{_1},z_{_2})=0<\frac{\varepsilon}{3}$. We have
$d(a,b)\leq d(a,z_{_1})+d(z_{_1},z_{_2})+d(z_{_2},b)<\frac{\varepsilon}{3}+\frac{\varepsilon}{3}+\frac{\varepsilon}{3}=\varepsilon$.
As a result, we can conclude that $d(a,b)<\varepsilon$ is true for every $\varepsilon>0$.
It follows that
$d(a,b)=0$ which implies that $(a,b)\in G_{_{\tau_{_d}}}$.

}
\end{example}

\begin{theorem}\label{a3}  {\rm Let $\mathfrak{D}=(X,\tau_{_1},\tau_{_2})$ be a bitopological space
and $\precsim$ a preorder on $X$. Then,
\par
($\mathfrak{a}$) The preorder is $\tau_{_1}\times \tau_{_2}$-closed if and only if for every two points $a, b\in X$ such that 
$a\not\precsim b$ there exist a 
increasing $\tau_{_1}$-neighborhood $V_{_a}$ of $a$ and a 
decreasing $\tau_{_2}$-neighborhood $V_{_b}$ of $b$ which are disjoint.
\par
($\mathfrak{b}$) If the preorder is $\tau_{_1}\times \tau_{_2}$-closed, then
for every $a\in X$ the set $d(a)$ is $\tau_{_1}$-closed and 
the set $i(a)$ is $\tau_{_2}$-closed.
\par
($\mathfrak{c}$)} If the preorder is $\tau_{_1}\times \tau_{_2}$-closed, then
$ a\in cl_{_{\tau_{_1}}}\{b\}\Rightarrow
a\precsim b$ and $ b\in cl_{_{\tau_{_2}}}\{a\}\Rightarrow
a\precsim b$.
\end{theorem}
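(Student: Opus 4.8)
The three parts build on one another, so the plan is to establish $(\mathfrak{a})$ first and then deduce $(\mathfrak{b})$ from it and $(\mathfrak{c})$ from $(\mathfrak{b})$, using throughout that $\precsim$ is reflexive and transitive. Write $G=\{(x,y)\in X\times X\mid x\precsim y\}$ for the graph of the preorder, and for a subset $S\subseteq X$ let $i(S)=\{z\mid \exists\, x\in S,\ x\precsim z\}$ and $d(S)=\{z\mid \exists\, y\in S,\ z\precsim y\}$ be its increasing and decreasing hulls, i.e.\ the smallest increasing (resp.\ decreasing) sets containing $S$. The key observation is that ``$\precsim$ is $\tau_{_1}\times\tau_{_2}$-closed'' means precisely that $(X\times X)\setminus G=\{(a,b)\mid a\nleq b\}$ is open in the product topology, and a set is $\tau_{_1}\times\tau_{_2}$-open exactly when every point of it sits in a basic rectangle $U\times W$ (with $U\in\tau_{_1}$, $W\in\tau_{_2}$) contained in it.

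For $(\mathfrak{a})$ I would argue both implications through this rectangle criterion. For the ``if'' direction, assume that for $a\nleq b$ there are disjoint neighborhoods $V_{_a}$ (increasing, $\tau_{_1}$) and $V_{_b}$ (decreasing, $\tau_{_2}$). I claim $(V_{_a}\times V_{_b})\cap G=\emptyset$: if $(x,y)\in V_{_a}\times V_{_b}$ and $x\precsim y$, then $y\in V_{_a}$ because $V_{_a}$ is increasing, whence $y\in V_{_a}\cap V_{_b}=\emptyset$, a contradiction. Replacing $V_{_a}$ and $V_{_b}$ by the $\tau_{_1}$- and $\tau_{_2}$-open sets they respectively contain yields a basic rectangle about $(a,b)$ missing $G$, so $G$ is closed. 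For the ``only if'' direction, start from a basic rectangle $U_{_a}\times W_{_b}$ around $(a,b)$ disjoint from $G$ and set $V_{_a}=i(U_{_a})$, $V_{_b}=d(W_{_b})$. These are an increasing $\tau_{_1}$-neighborhood and a decreasing $\tau_{_2}$-neighborhood of $a$ and $b$, and they are disjoint: a common point $z$ would supply $x\in U_{_a}$ with $x\precsim z$ and $y\in W_{_b}$ with $z\precsim y$, so by transitivity $x\precsim y$, contradicting $(U_{_a}\times W_{_b})\cap G=\emptyset$.

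Parts $(\mathfrak{b})$ and $(\mathfrak{c})$ then follow by routine neighborhood arguments. For $(\mathfrak{b})$, to see that $d(a)$ is $\tau_{_1}$-closed take any $b\notin d(a)$, i.e.\ $b\nleq a$; applying $(\mathfrak{a})$ to the pair $(b,a)$ gives disjoint $V_{_b}$ (increasing, $\tau_{_1}$) and $V_{_a}$ (decreasing, $\tau_{_2}$), and $V_{_b}\cap d(a)=\emptyset$ because any $x\in V_{_b}$ with $x\precsim a$ would lie in the decreasing set $V_{_a}$. Thus $V_{_b}$ exhibits $b$ as an interior point of the complement of $d(a)$, so $d(a)$ is $\tau_{_1}$-closed; the argument for $i(a)$ being $\tau_{_2}$-closed is symmetric. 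For $(\mathfrak{c})$, suppose $a\in cl_{\tau_1}\{b\}$ but $a\nleq b$. By $(\mathfrak{b})$ the set $d(b)$ is $\tau_{_1}$-closed, so $X\setminus d(b)$ is a $\tau_{_1}$-open neighborhood of $a$; since $a\in cl_{\tau_1}\{b\}$ this neighborhood must contain $b$, forcing $b\notin d(b)$, i.e.\ $b\nleq b$, contradicting reflexivity. Hence $a\precsim b$, and the second implication is handled identically using the $\tau_{_2}$-closedness of $i(a)$.

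The only genuinely delicate point is the ``only if'' half of $(\mathfrak{a})$: one must upgrade the arbitrary separating rectangle supplied by closedness to an increasing/decreasing separating pair. Passing to the hulls $i(U_{_a})$ and $d(W_{_b})$ is the natural move, and the single place where transitivity of $\precsim$ is indispensable is in checking that these hulls stay disjoint. It is also worth stressing that admitting $V_{_a},V_{_b}$ as neighborhoods rather than open sets is exactly what legitimizes this passage, since the hulls need not themselves be open.
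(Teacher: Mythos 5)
Your proof is correct and follows essentially the same route as the paper: part ($\mathfrak{a}$) is established exactly as there, by passing to the increasing/decreasing hulls $i(U_{_a})$, $d(W_{_b})$ of an open separating rectangle and using transitivity to keep them disjoint (you argue the other implication directly where the paper argues by contradiction, but the mechanism is identical), and part ($\mathfrak{b}$) is the same neighborhood argument. The only small divergence is in part ($\mathfrak{c}$), which you deduce from ($\mathfrak{b}$) plus reflexivity (the absurdity $b\nleq b$), whereas the paper deduces it directly from ($\mathfrak{a}$) by noting that $b$ would be excluded from some $\tau_{_1}$-open neighborhood of $a$; both are routine one-step consequences of the separation property.
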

\begin{proof}
$\mathfrak{a})$ Let $\precsim$ is a $\tau_{_1}\times \tau_{_2}$-closed
and
$a, b\in X$ such that $a\not\precsim b$. 
Since $(a,b)$ does not belong to the graph $G_{_{\precsim}}$ of $\precsim$ and $G_{_{\precsim}}$ is $\tau_{_1}\times \tau_{_2}$-closed, there exist a  $\tau_{_1}$-open neighborhood $V_a$ of $a$ and
a $\tau_{_2}$-open neighborhood $V_b$ of
$b$ such that
$V_a\times V_b\subset X\times X\setminus G_{_{\precsim}}$.
Let $\widetilde{V}_a=\{\lambda\in X\vert \exists \mu\in V_a\ \ {\rm such\ that}\ \ \mu\precsim \lambda\}$ and $\widetilde{V}_b=\{\kappa\in X\vert \exists \nu\in V_b\ \ {\rm such\ that}\ \ \kappa\precsim \nu\}$.
Clearly, $\widetilde{V}_a$ is an increasing $\tau_{_1}$-neighborhood of $a$
and
$\widetilde{V}_b$ is an decreasing $\tau_{_2}$-neighborhood of $b$ 
such that
$ \widetilde{V}_a\cap \widetilde{V}_b=\emptyset$. Indeed,
suppose to the contrary that it is not the case and let
$z\in   \widetilde{V}_a\cap \widetilde{V}_b$. 
Then, $\mu\precsim z$ and $z\precsim \nu$ implies that $\mu\precsim \nu$. Hence,
$(\mu,\nu)\in G_{_{\precsim}}$ and 
$(\mu,\nu)\in  V_{_a}\times V_{_{b}}\subset X\times X\setminus G_{_{\precsim}}$ which is impossible.

Conversely, suppose to the contrary that 
for each $a, b\in X$, $a\not\precsim b$ implies that
there exists 
an increasing $\tau_{_1}$-neighborhood $V_a$ of $a$
and a decreasing $\tau_{_2}$-neighborhood $V_b$ of $b$ 
such that
$ V_a\cap V_b=\emptyset$,
while $\precsim$ is not $\tau_{_1}\times \tau_{_2}$-closed.
Thus, there exists
$(a,b)\in X\times X$ such that $(a,b)\in X\times X\setminus G_{_{\precsim}}$ and
$(a,b)\in cl_{_{\tau_{_1}\times\tau_{_2}}}G_{_{\precsim}}$, or equivalently,
$a\not\precsim b$ and for each
$\tau_{_1}$-neighborhood $V_a$ of $a$
and each $\tau_{_2}$-neighborhood $V_b$ of $b$ there holds $V_a\times V_b\cap G_{_{\precsim}}
\neq\emptyset$. If 
$V_a$ is increasing and $V_b$ is decreasing, then
from $V_a\times V_b\cap G_{_{\precsim}}\neq \emptyset$, we conclude that there exist $x, y\in X$ such that 
$(x,y)\in V_a\times V_b$ and $x\precsim y$.
It follows that $x, y\in V_a\cap V_b$, a contradiction
to our assumption that $V_a\cap V_b$ must be empty for at least one pair of these neighborhoods. 
Therefore, $\precsim$ is $\tau_{_1}\times \tau_{_2}$-closed.
\par\noindent
$\mathfrak{b})$ Let $a\in X$. To show that $d(a)$ is $\tau_{_1}$-closed we prove that
$X\setminus d(a)$ is $\tau_{_1}$-open. Indeed, let $b\in X\setminus d(a)$.
It follows that $b\not\precsim a$. By the first part of proposition
there exists 
an increasing $\tau_{_1}$-neighborhood $\widetilde{V}_b$ of $b$
and a decreasing $\tau_{_2}$-neighborhood $\widetilde{V}_a$ of $a$ 
such that
$ \widetilde{V}_b\cap \widetilde{V}_a=\emptyset$. Hence, there exists a $\tau_{_1}$-open neighborhood $V_b$ of $b$ such that $b\in V_b\subseteq \widetilde{V}_b$ and 
$d(a)\subseteq \widetilde{V}_a$. It follows that $d(a)\cap V_b=\emptyset$ which implies
that $X\setminus d(a)$ is $\tau_{_1}$-open ($b\in V_b\subset X\setminus d(a)$). 
\par\noindent
$\mathfrak{c})$ Let
$a\in cl_{_{\tau_{_1}}}\{b\}$. Then, $b$ belong to all the $\tau_{_1}$-open neighborhoods
$V_a$ of $a$. Suppose to the contrary that $a\not\precsim b$. Then, by the part 
($\mathfrak{a})$
of proposition,
there exist a $\tau_{_1}$-neighborhood $ \widetilde{V}_a$ of $a$
and a
$\tau_{_2}$-neighborhood $ \widetilde{V}_b$ of $b$ 
such that
$ \widetilde{V}_a\cap \widetilde{V}_b=\emptyset$.
By definition, there exists a $\tau_{_1}$-open neighborhood $V_a$ of $a$ such that
$V_a\subseteq \widetilde{V}_a$.
It follows that $b\notin V_a$, a contradiction. Therefore, $a\precsim b$. Similarly we prove that 
$ b\in cl_{_{\tau_{_2}}}\{a\}\Rightarrow
a\precsim b$.
\end{proof}

\begin{proposition}\label{pan}{\rm Each bitopological space $\mathfrak{D}=(X,\tau_{_1},\tau_{_2})$
equipped with a closed preorder is 
a pairwise Hausdorff space; that is $\tau_1\bigvee \tau_2$ is a Hausdoff topology. }
\end{proposition}
\begin{proof}
Take two distinct points into consideration $x, y\in X$. Due to the fact that we are dealing with a preorder $\precsim$, one of the two relationships $x\precsim y$,  $y\precsim x$
is false. Assume the first is false (the case of the second is analogous).
By Theorem \ref{a3},
there exist a 
increasing $\tau_{_1}$-neighborhood $V_{_x}$ of $x$ and a 
decreasing $\tau_{_2}$-neighborhood $V_{_y}$ of $y$ which are disjoint. Since the dual space
$\mathfrak{X}^{\ast}=(X,\tau_{_2},\tau_{_1},(\precsim)^{-1})$,  is also a bitopological preordered space, 
there exist a 
increasing $\tau_{_2}$-neighborhood $V_{_y}$ of $y$ and a 
decreasing $\tau_{_1}$-neighborhood $V_{_x}$ of $x$ which are disjoint. Therefore, 
$\mathfrak{D}$ is a pairwise Hausdorff space, $\tau_1\bigvee \tau_2$ is a Hausdoff topology.
\end{proof}

In \cite{and} Andrikopoulos presents a version of Nachbin's theory based on bitopological ordered spaces. The following theorem is taken from Andrikopoulos \cite[Theorem 1.8]{and}.

\begin{theorem}\label{a231}{\rm A bitopological ordered space $\mathfrak{X}=(X,\tau_{_1},\tau_{_2},\precsim)$
is pairwise normally ordered if and only if given a decreasing $\tau_{_1}$-closed
set $A$ and an increasing $\tau_{_2}$-closed
set $B$ with $A\cap B=\emptyset$, there exists an increasing real-valued function $f$ on $X$ such that
\par
(i) $f(A)=0$, $f(B)=1$ and $0\leq f(x)\leq 1$,
\par
(ii) $f$ is 
$\tau_{_1}$-lower semicontinuous and 
$\tau_{_2}$-upper semicontinuous.}
\end{theorem}

\begin{proof} To prove sufficiency, we follow the line of the proof of the classical theorems of Nachbin \cite[Theorem 2]{nac} and Kelly \cite[Theorem 2.7]{kel} (see also \cite[Theorem 1.8]{and}. 
Let $A$ be
a decreasing $\tau_{_1}$-closed
set in $X$ and $B$ be an increasing $\tau_{_2}$-closed
set $B$ in $X$ with $A\cap B=\emptyset$. 
Since $\mathfrak{X}$ is pairwise normal there exist an 
increasing 
$\tau_{_1}$-open set $O_{_1}$ and a
decreasing $\tau_{_2}$-open set $O_{_2}$ such that $A\subset O_{_2}$, $B\subset O_{_1}$and $O_{_1}\cap O_{_2}=\emptyset$. We put 
$A_{_0}=A$, $A_{_{1\over 2}}=X\setminus O_{_1}$, $B_{_{1\over 2}}=O_{_2}$ and 
$B_{_1}=X\setminus B$.
Then, we have 
$A_{_0}\subseteq B_{_{1\over 2}}\subseteq A_{_{1\over 2}}\subseteq B_{_1}$, $cl_{_{\tau_{_1}}}B_{_{1\over 2}}
\subseteq B_{_1}$. When we apply our hypothesis on $\mathfrak{X}$ to each pair of sets $A_{_0}=A, B_{_{1\over 2}}$
and $A_{_{1\over 2}}, B_{_1}$,
we get decreasing $\tau_{_1}$-closed sets $A_{_{1\over 4}}, A_{_{3\over 4}}$ 
and increasing $\tau_{_2}$-oped sets $B_{_{1\over 4}}, B_{_{3\over 4}}$ 
such that
\begin{center}
$A_{_0}\subseteq B_{_{1\over 4}}\subseteq A_{_{1\over 4}}\subseteq B_{_{1\over 2}}\subseteq A_{_{1\over 2}}\subseteq 
B_{_{3\over 4}}\subseteq A_{_{3\over 4}}\subseteq B_{_1}$.
\end{center}
Continuing this process, keeping in mind that dyadic rationals are dense in R,
we obtain by induction families
$(A_{_s})_{_{s\in S}}$, $(B_{_s})_{_{s\in S}}$, where 
$S=\{\frac{p}{2^{^q}}\vert \ p=1,2,..., 2^q-1,\ q\in \mathbb{N}\setminus \{0\}\}$. 
We put $A_{_s}=\emptyset$ if $s<0$, $A_{_s}=X$ if $s\geq 1$ and 
$B_{_s}=\emptyset$ if $s\leq 0$, $B_{_s}=X$ if $s>1$.
Then,
\begin{center}
$B_{_r}\subseteq B_{_s}\subseteq A_{_s}\subseteq B_{_t}$ if $r\leq s\leq t$, and $A_{_s}\subseteq B_{_t}$ if $s<t$.
\end{center}
Define 
\begin{center}
$f(x)=inf\{t\in S\vert x\in B_{_t}\}$.
\end{center}
Then,
\begin{center}
$f(x)=inf\{t\in S\vert x\in A_{_t}\}$.
\end{center}

We have $f(A_{_0})=f(A)=0$, $f(B_{_1})=f(X\setminus B)=1$ and $0\leq f(x)\leq 1$ for all
$x\in X$.
To prove that $f$ is increasing, let $x\precsim y$ for some $x, y\in X$. 
As a result, for some $t^{\ast}$ in $S$, $y\in B_{_{t^{\ast}}}$ holds true.
 Since $B_{_{t^{\ast}}}$ is decreasing we have that 
$x\in B_{_{t^{\ast}}}$.
By the definition of $f$ we have $f(x)=inf\{t\in S\vert x\in B_{_{t}}\}\leq t^{\ast}$.
Due to the arbitrariness of $t^{\ast}$, we conclude that
$f(x)\leq inf\{t^{\ast}\in S\vert y\in B_{_{t^{\ast}}}\}=f(y)$.

It remains to prove that $f$ is a 
$\tau_{_1}$-lower semicontinuous 
$\tau_{_2}$-upper semicontinuous function on $\mathfrak{X}$.
By the above construction, we have
\begin{center}
$A_{_t}\subseteq int_{_{\tau_{_2}}}A_{_s}$ and $cl_{_{\tau_{_1}}}B_{_t}\subseteq B_{_s}$ for $t<s$. 
\end{center}
To prove that $f$ is a 
$\tau_{_1}$-lower semicontinuous, retaining the previous proof process's symbolism, 
let $t, r, s\in S$ and $\varepsilon>0$ such that $f(x)-\varepsilon<t<r<s<f(x)$.
We know that $x\notin B_{_s}$, so $x\in X\setminus cl_{_{\tau_{_1}}}B_{_r}=G$.
For each $t<r$ and $y\in G$ we have that $y\notin B_{_t}$ and thus $f(x)-\varepsilon<r<f(y)$.
It follows that  $f$ is a 
$\tau_{_1}$-lower semicontinuous function.
To prove that $f$ is a 
$\tau_{_2}$-upper semicontinuous,
let $t, s\in S$ and $\varepsilon>0$ such that $f(x)<t<s<f(x)+\varepsilon$.
We have that $x\in A_{_t}\subseteq int_{_{\tau_{_2}}}A_{_s}$. If $y \in int_{_{\tau_{_2}}}A_{_s}$,
then $f(y)<s<f(x)+\varepsilon$ which completes the proof.

To prove necessity, let
$A, B$ be two disjoint subsets of $X$ such that $A$ is $\tau_{_1}$-closed and $B$ is $\tau_{_2}$-closed.  By hypothesis, there exists an increasing  
$\tau_{_1}$-lower semicontinuous and $\tau_{_2}$-upper semicontinuous function $f$ such that
$f(A)=0, f(B)=1$ and $0\leq f(x)\leq 1$ for each $x\in X$. By the $\tau_{_1}$-lower semicontinuouity of $f$, the set 
$U=f^{-1}((\frac{1}{2},1])$ is $\tau_{_1}$-open such that $B\subset U$ and 
by the $\tau_{_2}$-upper semicontinuouity of $f$, the set 
$V=f^{-1}([0,\frac{1}{2}))$ is $\tau_{_2}$-open set
such that $A\subset V$. 
Clearly, $U\cap V=\emptyset$. On the other hand,
since $f$ is increasing we have that $V$ is decreasing and $U$ is increasing.
\end{proof}

In case where the order considered is
the equality relation 
$\Delta=\{(x,x)\vert x\in X\}$, Theorem \ref{a231} reduces to Kelly's theorem \cite[Theorem 2.7]{kel}.

\begin{proposition}\label{a4}  {\rm Let $\mathfrak{X}=(X,\tau_{_1},\tau_{_2},\precsim)$ be a bitopological 
ordered space. 
\par
($\mathfrak{a}$) If $A$ is a $\tau_{_1}$-compact subset of $X$ and  
$B$ is a $\tau_2$-compact subset of $X$, then the increasing subset $i(A)$ generated 
by $A$ is $\tau_{_2}$-closed
and the decreasing subset $d(B)$ generated by $B$ is $\tau_{_1}$-closed.
\par
($\mathfrak{b}$) If $\tau_{_1}\vee \tau_{_2}$ is quasicompact, then each $\tau_{_1}$-closed subset of $X$ is $\tau_{_2}$-quasicompact and each $\tau_{_2}$-closed subset of $X$ is $\tau_{_1}$-quasicompact.
}
\end{proposition}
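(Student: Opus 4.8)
The plan is to deduce part $(\mathfrak{a})$ from the separation property recorded in Theorem~\ref{a3}$(\mathfrak{a})$, and part $(\mathfrak{b})$ from the elementary behaviour of quasicompactness under refinement of topologies. Throughout I write $i(A)=\{x\in X\mid \exists a\in A,\ a\precsim x\}$ and $d(B)=\{x\in X\mid \exists b\in B,\ x\precsim b\}$, and I recall that a bitopological ordered space carries a $\tau_{_1}\times\tau_{_2}$-closed preorder, so Theorem~\ref{a3} applies.

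For part $(\mathfrak{a})$ I would show that $X\setminus i(A)$ is $\tau_{_2}$-open; the assertion for $d(B)$ is symmetric. Fix $b\in X\setminus i(A)$. By the definition of $i(A)$ one has $a\nleq b$ for every $a\in A$, so Theorem~\ref{a3}$(\mathfrak{a})$ furnishes, for each $a\in A$, an increasing $\tau_{_1}$-neighborhood $V_a$ of $a$ and a decreasing $\tau_{_2}$-neighborhood $W_a$ of $b$ with $V_a\cap W_a=\emptyset$. Choosing inside each $V_a$ a $\tau_{_1}$-open set $G_a$ with $a\in G_a$, the family $\{G_a\mid a\in A\}$ is a $\tau_{_1}$-open cover of the $\tau_{_1}$-compact set $A$, so finitely many $G_{a_1},\dots,G_{a_n}$ already cover $A$.

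The key step is then to form $W=\bigcap_{i=1}^{n}W_{a_i}$, which is a decreasing $\tau_{_2}$-neighborhood of $b$ (being a finite intersection of such sets), and to check that $W\cap i(A)=\emptyset$. Indeed, if some $x$ lay in both, then $a\precsim x$ for some $a\in A$; choosing $i$ with $a\in G_{a_i}\subseteq V_{a_i}$ and using that $V_{a_i}$ is increasing gives $x\in V_{a_i}$, whereas $x\in W\subseteq W_{a_i}$ would force $x\in V_{a_i}\cap W_{a_i}=\emptyset$, a contradiction. Hence some $\tau_{_2}$-open set containing $b$ lies in $X\setminus i(A)$, and as $b$ was arbitrary, $i(A)$ is $\tau_{_2}$-closed. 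The mirror-image argument, covering the $\tau_{_2}$-compact set $B$ by $\tau_{_2}$-open kernels and intersecting the increasing $\tau_{_1}$-neighborhoods of the point, shows that $d(B)$ is $\tau_{_1}$-closed. The monotonicity of the separating neighborhoods is precisely what makes the finite-intersection set avoid $i(A)$ (respectively $d(B)$), so it is the crucial ingredient; the remaining work is only bookkeeping of the two topologies.

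For part $(\mathfrak{b})$ I would argue directly. Let $F$ be $\tau_{_1}$-closed. Since $\tau_{_1}\subseteq\tau_{_1}\vee\tau_{_2}$, the set $F$ is also $\tau_{_1}\vee\tau_{_2}$-closed, hence a closed subset of the quasicompact space $(X,\tau_{_1}\vee\tau_{_2})$ and therefore itself $\tau_{_1}\vee\tau_{_2}$-quasicompact (adjoining $X\setminus F$ to any $\tau_{_1}\vee\tau_{_2}$-open cover of $F$ yields a cover of $X$, whose finite subcover restricts to $F$). Finally, since $\tau_{_2}\subseteq\tau_{_1}\vee\tau_{_2}$, every $\tau_{_2}$-open cover of $F$ is in particular a $\tau_{_1}\vee\tau_{_2}$-open cover and so admits a finite subcover; thus $F$ is $\tau_{_2}$-quasicompact. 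Exchanging the two topologies gives the statement for $\tau_{_2}$-closed sets. The only thing to notice here is that quasicompactness passes from a finer topology to a coarser one, which is immediate from the cover definition.
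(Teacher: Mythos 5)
Your proof is correct and follows essentially the same route as the paper's: for part $(\mathfrak{a})$ both arguments invoke Theorem~\ref{a3}$(\mathfrak{a})$ to separate each point of $A$ from $b$, use $\tau_{_1}$-compactness to extract a finite subcover, and show the finite intersection of the decreasing $\tau_{_2}$-neighborhoods of $b$ misses $i(A)$ (the paper does this by noting the complement of that intersection is increasing and contains $A$, you do it pointwise via the increasing $V_{a_i}$ --- a negligible variation), and for part $(\mathfrak{b})$ both use that closed subsets of a quasicompact space are quasicompact and that quasicompactness passes to coarser topologies.
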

\begin{proof}($\mathfrak{a}$) To show that $i(A)$ is $\tau_{_2}$-closed, it is enough to prove that 
$X\setminus i(A)$ is $\tau_{_2}$-open. Indeed, let $b\in  X\setminus i(A)$. Then, 
for each $a\in A$, we have $a\not\precsim b$.
By Theorem \ref{a3}($\mathfrak{a}$), there is an increasing $\tau_{_1}$-neighborhood 
$\widetilde{V}_{_a}$ of $a$ and a decreasing $\tau_{_2}$-neighborhood 
$\widetilde{V}_{_{b(a)}}$ of $b$ such that
$ \widetilde{V}_{_a}\cap \widetilde{V}_{_{b(a)}}=\emptyset$.
On the other hand, for each $a\in A$ there exists an $\tau_{_1}$-open neighborhood $V_a$ of
$a$ such that $V_a\subseteq \widetilde{V}_{_a}$. 
Let $\mathcal{C}=\{V_{_a}\vert a\in A\}$. Then, $\mathcal{C}$ is a $\tau_1$-open cover of $A$.
Therefore, by $\tau_{_1}$-compactness of $A$, there exist $a_{_1}, a_{_2},..., a_{_n}\in A$ such that 
$A\subset V_{_{a_{_1}}}\cup V_{_{a_{_2}}}...
\cup V_{_{a_{_n}}}
\subset \widetilde{V}_{_{a_{_1}}}\cup \widetilde{V}_{_{a_{_2}}}...
\cup \widetilde{V}_{_{a_{_n}}}$.
Let
$ \widetilde{V}_{_{b}}= \widetilde{V}_{_{b(a_{_1})}}\cap \widetilde{V}_{_{b(a_{_2})}}...
\cap \widetilde{V}_{_{b(a_{_n})}}$. Then, $b\in \widetilde{V}_{_{b}}$
and 
\begin{center}
$\widetilde{V}_{_{b}}\cap A\subseteq \widetilde{V}_{_{b}}\cap
(\displaystyle\bigcup_{i\in \{1,2,...,n\}} \widetilde{V}_{_{a_{_i}}})=
\displaystyle\bigcup_{i\in \{1,2,...,n\}} \widetilde{V}_{_{b}}\cap\widetilde{V}_{_{a_{_i}}}
\subseteq \displaystyle\bigcup_{i\in \{1,2,...,n\}} \widetilde{V}_{_{b(a_{_i})}}\cap\widetilde{V}_{_{a_{_i}}}=\emptyset.
$
\end{center}
Therefore, $A\subset X\setminus \widetilde{V}_{_{b}}$. Since 
$X\setminus \widetilde{V}_{_{b}}$ is increasing, we conclude that 
$i(A)\subset X\setminus \widetilde{V}_{_{b}}$ or equivalently 
$i(A)\cap \widetilde{V}_{_{b}}=\emptyset$. Since $\widetilde{V}_{_{b}}$ is a 
$\tau_{_2}$-neighborhood of $b$, there exists a 
$\tau_{_2}$-open neighborhood $W_{_b}$ of $b$ such that $b\in W_{_b}\subseteq \widetilde{V}_{_{b}}$. It follows that 
$i(A)\cap W_{_{b}}=\emptyset$ which implies that  $i(A)$ is 
$\tau_{_2}$-closed. In a similar way we can prove that the subset
$d(A)$ generated by $A$ is $\tau_{_1}$-closed.
\par\smallskip\par\noindent
($\mathfrak{b}$) Let $A$ be a $\tau_{_1}$-closed subset of $X$, then it is $\tau_{_1}\vee \tau_{_2}$-closed.
Since $\tau_{_1}\vee \tau_{_2}$ is quasicompact, we have that 
 $A$ be a $\tau_{_1}\vee \tau_{_2}$-quasicompact subset of $X$. Then,  $A$ is quasicompact
in the weaker topology $\tau_{_2}$. 
\end{proof}

\section{Joincompact bitopological ordered spaces}

We can now proceed to the definition of joincompact bitopological spaces.

\begin{definition}\label{plk}{\rm A bitopological space $\mathfrak{D}=(X,\tau_{_1},\tau_{_2})$
is
{\it joincompact} if it is pairwise Hausdorff and the topology $\tau_{_1}\bigvee \tau_{_2}$ is quasi-compact.} 
\end{definition}

Joincompact spaces and
Lawson-closed subsets of continuous lattices are the same objects with different names. 
In addition, the order on a continuous lattice corresponds to the specialization order of the Scott 
topology of the lattice (see paragraph 6 below).

The following proposition is an immediate consequence of Definition \ref{plk} and Proposition \ref{pan}.

\begin{proposition} {\rm A quasi-compact bitopological preordered space $\mathfrak{X}=(X,\tau_{_1},\tau_{_2},$
$\precsim)$
is a joincompact bitopological ordered space.}
\end{proposition}

\begin{corollary}\label{a5}{\rm  Let $\mathfrak{X}=(X,\tau_{_1},\tau_{_2},\precsim)$ be a joincompact bitopological ordered space. Then,
the $\tau_{_1}$-closed sets are precisely the 
$\tau_{_2}$-compact sets and the $\tau_{_2}$-closed sets are precisely the 
$\tau_{_1}$-compact sets.
}
\end{corollary}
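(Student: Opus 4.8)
The plan is to prove the two asserted equivalences separately; by the evident symmetry (interchanging the roles of $\tau_{_1}$ and $\tau_{_2}$) it suffices to show that a subset $A\subseteq X$ is $\tau_{_1}$-closed if and only if it is $\tau_{_2}$-compact. The forward implication is essentially free: since $\mathfrak{X}$ is joincompact, $\tau_{_1}\vee\tau_{_2}$ is quasicompact, so Proposition \ref{a4}($\mathfrak{b}$) applies verbatim and tells us that every $\tau_{_1}$-closed subset of $X$ is $\tau_{_2}$-quasicompact, i.e. $\tau_{_2}$-compact. Interchanging the topologies gives that every $\tau_{_2}$-closed set is $\tau_{_1}$-compact. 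So half of each equivalence is already contained in Proposition \ref{a4}($\mathfrak{b}$).

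The substance is the converse: that a $\tau_{_2}$-compact set $A$ is $\tau_{_1}$-closed. I would run the bitopological analogue of the classical fact that compact subsets of a Hausdorff space are closed. Fix $b\in X\setminus A$; it is enough to produce a $\tau_{_1}$-open neighborhood of $b$ that misses $A$. For each $a\in A$ we have $a\neq b$, so the pairwise Hausdorff separation built into joincompactness (the same separation underlying the Hausdorffness of $\tau_{_1}\vee\tau_{_2}$ invoked in Proposition \ref{a2}) supplies a $\tau_{_2}$-open neighborhood $U_{_a}$ of $a$ and a $\tau_{_1}$-open neighborhood $W_{_a}$ of $b$ with $U_{_a}\cap W_{_a}=\emptyset$. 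The family $\{U_{_a}\mid a\in A\}$ is then a $\tau_{_2}$-open cover of $A$, and $\tau_{_2}$-compactness yields a finite subcover $U_{_{a_{_1}}},\dots,U_{_{a_{_n}}}$. Setting $W=\bigcap_{i=1}^{n}W_{_{a_{_i}}}$ gives a $\tau_{_1}$-open neighborhood of $b$ with $W\cap U_{_{a_{_i}}}\subseteq W_{_{a_{_i}}}\cap U_{_{a_{_i}}}=\emptyset$ for every $i$, hence $W\cap A=\emptyset$. Thus $X\setminus A$ is $\tau_{_1}$-open and $A$ is $\tau_{_1}$-closed; the symmetric argument gives that each $\tau_{_1}$-compact set is $\tau_{_2}$-closed, completing both equivalences.

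The hard part will be securing the separating pairs in the correct \emph{orientation}: for the finite subcover to be extracted in $\tau_{_2}$ while the shrinking neighborhoods of the external point $b$ remain in $\tau_{_1}$, I must guarantee that it is $b$ that receives the $\tau_{_1}$-open neighborhood and each $a$ that receives the $\tau_{_2}$-open one, rather than the reverse. This orientation is exactly the delicate point of pairwise Hausdorffness, and it is the only place where joincompactness is used beyond Proposition \ref{a4}($\mathfrak{b}$). If one prefers to avoid appealing to pairwise Hausdorffness directly, the separation can instead be routed through Theorem \ref{a3}($\mathfrak{a}$) together with Proposition \ref{a4}($\mathfrak{a}$), but the direct argument above is the cleanest, and verifying that the separation is available with the required orientation is where I would concentrate the care.
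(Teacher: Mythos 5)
Your forward implication is exactly what the paper itself relies on: the corollary appears in the paper with no proof at all, as an immediate consequence of Proposition \ref{a4}, and Proposition \ref{a4}($\mathfrak{b}$) does give that every $\tau_{_1}$-closed set is $\tau_{_2}$-(quasi)compact and every $\tau_{_2}$-closed set is $\tau_{_1}$-(quasi)compact. The problem is the converse implication, and the gap sits exactly where you flagged it --- but it is not a matter of ``care,'' it is fatal. Neither pairwise Hausdorffness nor Theorem \ref{a3}($\mathfrak{a}$) lets you choose the orientation of the separating pair. Theorem \ref{a3}($\mathfrak{a}$) attaches the increasing $\tau_{_1}$-neighborhood to the left coordinate of whichever non-relation actually holds: if $b\not\precsim a$ you get the orientation your cover argument needs ($b$ in a $\tau_{_1}$-open set, $a$ in a $\tau_{_2}$-open set), but if $b\prec a$ then $b\not\precsim a$ fails, only $a\not\precsim b$ is available, and the separation comes out in the opposite orientation. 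So for points $b$ strictly below the compact set no separating pair of the required kind need exist, and the extraction of the finite subcover never gets started.

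In fact the converse implication --- hence the corollary as literally stated --- is false, so no repair of the argument is possible. Take the paper's own Example \ref{ex0} restricted to $[0,1]$: $X=[0,1]$, $\tau_{_1}=\mathfrak{U}=\{(a,1]\}\cup\{\emptyset,X\}$, $\tau_{_2}=\mathfrak{L}=\{[0,a)\}\cup\{\emptyset,X\}$, $\precsim$ the usual order. This is a joincompact bitopological ordered space: $\tau_{_1}\vee\tau_{_2}$ is the Euclidean topology on $[0,1]$, which is compact Hausdorff, and $\precsim$ is $\tau_{_1}\times\tau_{_2}$-closed. The singleton $\{1/2\}$ is $\tau_{_2}$-compact (finite sets are compact in any topology), but it is not $\tau_{_1}$-closed: the $\mathfrak{U}$-closed sets are exactly $\emptyset$, $X$ and the intervals $[0,a]$, and indeed every $\mathfrak{U}$-open set $(c,1]$ containing a point $b<1/2$ also contains $1/2$, so $cl_{_{\tau_{_1}}}\{1/2\}=[0,1/2]$. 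Note that $[0,1/2]=d(\{1/2\})$: this is precisely what Proposition \ref{a4}($\mathfrak{a}$) delivers, namely that for a $\tau_{_2}$-compact set $B$ the decreasing hull $d(B)$ is $\tau_{_1}$-closed, not $B$ itself. The statement can only be saved by inserting a monotonicity/saturation hypothesis on the compact side (decreasing $\tau_{_2}$-compact sets are $\tau_{_1}$-closed; the clean two-sided identification of closed sets with compact \emph{saturated} sets is the form the result takes in \cite{HKMS}). Both your attempted proof and the paper's unproved corollary founder on the same missing hypothesis.
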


\begin{proposition}\label{a117} {Let $\mathfrak{X}=(X,\tau_1,\tau_2,\precsim)$ be a joincompact bitopological ordered space. 
If $A\subset X$ is a decreasing set and $O_{_1}$ is a $\tau_{_2}$-neighborhood 
of $A$, then there exists a $\tau_{_2}$-open decreasing neighborhood $O_{_2}$
of $A$ such that $A\subset O_{_2}\subset O_{_1}$.}
\end{proposition}
\begin{proof}Let $A$ and $O_{_1}$
be as in the supposition of the proposition.
Put 
$O_{_2}=X\setminus i(cl_{_{\tau_{_2}}}(X\setminus O_{_1}))$. Then, 
$O_{_2}$ is a decreasing $\tau_{_2}$-open subset of $X$. 
By Corollary \ref{a5},  $cl_{_{\tau_{_2}}}(X\setminus O_{_1})$ is $\tau_{_1}$-compact and
by Theorem \ref{a4}($\mathfrak{a}$) $ i(cl_{_{\tau_{_2}}}(X\setminus O_{_1}))$ is $\tau_{_2}$-closed. It follows that $O_{_2}$ is $\tau_{_2}$-open. Now, we have
\begin{center}
$X\setminus O_{_1}\subseteq cl_{_{\tau_{_2}}}(X\setminus O_{_1})\subseteq i(cl_{_{\tau_{_2}}}(X\setminus O_{_1}))$.
\end{center}
We show that $A\subset O_{_2}\subset O_{_1}$. Clearly, $O_{_2}\subset O_{_1}$. To prove that $A\subset O_{_2}$ it suffices to show that $A\cap i(cl_{_{\tau_{_2}}}(X\setminus O_{_1}))=\emptyset$. Indeed, suppose to the contrary that there exists $z\in X$ such that
$z\in A$ and $z\in i(cl_{_{\tau_{_2}}}(X\setminus O_{_1}))$. Then, there exists $w\in 
i(cl_{_{\tau_{_2}}}(X\setminus O_{_1}))$ such that $w\precsim z$. From $z\in A$ we get $w\in A$, a contradiction because $O_{_1}$ is a $\tau_{_2}$-neighborhood of $A$.
\end{proof}

The following proposition 
is the dual of Proposition \ref{a117} and the proof is similar to the one given in
Proposition \ref{a117}.

\begin{proposition}\label{a217} {Let $\mathfrak{X}=(X,\tau_1,\tau_2,\precsim)$ be a joincompact bitopological ordered space. 
If $B\subset X$ is an increasing set and $F_{_1}$ is a $\tau_{_1}$-neighborhood 
of $B$, then there exists a $\tau_{_1}$-open increasing neighborhood $F_{_2}$
of $B$ such that $B\subset F_{_2}\subset F_{_1}$.}
\end{proposition}

\begin{theorem}\label{a7}{\rm Every joincompact bitopological ordered space is
pairwise normally ordered space.}
\end{theorem}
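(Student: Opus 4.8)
The plan is to adapt the classical argument that a compact Hausdorff space is normal, carried out in the ordered bitopological setting, with the separation statement of Theorem \ref{a3}($\mathfrak{a}$) playing the role of regularity and Corollary \ref{a5} supplying the compactness needed to pass from pointwise to setwise separation. Concretely, I would start from a decreasing $\tau_{_1}$-closed set $A$ and an increasing $\tau_{_2}$-closed set $B$ with $A\cap B=\emptyset$, and produce a decreasing $\tau_{_2}$-open set $O_{_1}\supseteq A$ and an increasing $\tau_{_1}$-open set $O_{_2}\supseteq B$ that are disjoint, which is exactly what pairwise normality of the order requires.

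The first observation is that $b\nleq a$ whenever $a\in A$ and $b\in B$: if $b\precsim a$ held, then $b$ would lie in the decreasing set $A=d(A)$, contradicting $A\cap B=\emptyset$. Hence Theorem \ref{a3}($\mathfrak{a}$) applies to each pair $(b,a)$ with $b\nleq a$, yielding an increasing $\tau_{_1}$-neighborhood of $b$ and a decreasing $\tau_{_2}$-neighborhood of $a$ that are disjoint. At the same time, Corollary \ref{a5} tells us that $A$, being $\tau_{_1}$-closed, is $\tau_{_2}$-compact and that $B$, being $\tau_{_2}$-closed, is $\tau_{_1}$-compact; this is precisely the compactness I will feed into the two covering steps.

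Next comes the double compactness argument. Fixing $a\in A$, I cover $B$ by the $\tau_{_1}$-open sets contained in the increasing $\tau_{_1}$-neighborhoods of the points of $B$; by $\tau_{_1}$-compactness of $B$ finitely many $b_{_1},\dots,b_{_n}$ suffice, and the union $V^{a}$ of the corresponding increasing $\tau_{_1}$-neighborhoods $V_{b_{_i}}$ is an increasing $\tau_{_1}$-neighborhood of $B$, while the finite intersection $W^{a}$ of the matching decreasing $\tau_{_2}$-neighborhoods $W_{b_{_i}}$ of $a$ is a decreasing $\tau_{_2}$-neighborhood of $a$; here $V^{a}\cap W^{a}=\emptyset$ because $W^{a}\subseteq W_{b_{_i}}$ while $V_{b_{_i}}\cap W_{b_{_i}}=\emptyset$ for each $i$. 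Now I cover $A$ by the $\tau_{_2}$-open sets inside the $W^{a}$; by $\tau_{_2}$-compactness of $A$ finitely many $a_{_1},\dots,a_{_m}$ suffice, and setting $W=\bigcup_{j}W^{a_{_j}}$ (a decreasing $\tau_{_2}$-neighborhood of $A$) together with $V=\bigcap_{j}V^{a_{_j}}$ (an increasing $\tau_{_1}$-neighborhood of $B$) gives $V\cap W=\emptyset$ by the same disjointness bookkeeping, since $V\subseteq V^{a_{_j}}$ and $V^{a_{_j}}\cap W^{a_{_j}}=\emptyset$.

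Finally, these separating sets need not be open, so I upgrade them: Proposition \ref{a117} converts the decreasing $\tau_{_2}$-neighborhood $W$ of $A$ into a decreasing $\tau_{_2}$-open set $O_{_1}$ with $A\subset O_{_1}\subset W$, and Proposition \ref{a217} converts the increasing $\tau_{_1}$-neighborhood $V$ of $B$ into an increasing $\tau_{_1}$-open set $O_{_2}$ with $B\subset O_{_2}\subset V$. Since $O_{_1}\subset W$, $O_{_2}\subset V$ and $V\cap W=\emptyset$, we get $O_{_1}\cap O_{_2}=\emptyset$, establishing the claim. I expect the main obstacle to be the bookkeeping in the double covering step, namely keeping straight which topology ($\tau_{_1}$ versus $\tau_{_2}$) and which monotonicity (increasing versus decreasing) attaches to each set, and checking at each stage that unions of increasing sets stay increasing, unions of decreasing sets stay decreasing, and finite intersections preserve both monotonicity and disjointness.
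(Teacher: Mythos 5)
Your proof is correct and follows essentially the same double-compactness argument as the paper: pointwise separation from Theorem \ref{a3}($\mathfrak{a}$), passage from points to sets via the compactness supplied by joincompactness (Corollary \ref{a5}), and monotone open separating sets obtained from Propositions \ref{a117} and \ref{a217}. The only differences are organizational: you fix $a\in A$ and cover $B$ first (the paper fixes $b\in B$ and covers $A$ first), and you defer the upgrade from monotone neighborhoods to monotone \emph{open} sets to a single application of Propositions \ref{a117} and \ref{a217} at the very end, which is in fact cleaner than the paper's attempt to produce open monotone sets already at the pointwise stage.
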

\begin{proof} Consider a decreasing $\tau_{_1}$-closed set $A\subset X$ and an increasing $\tau_{_2}$-closed set $B\subset X$ such that $A\cap B=\emptyset$. Let $b\in B$.
Then, for each $x\in A$ we have $b\not\precsim x$. By Theorem \ref{a3}($\mathfrak{a}$),
there exists a decreasing $\tau_{_2}$-neighborhood $V_{_x}$ of $x$ and an increasing 
$\tau_{_1}$-neighborhood $V_{_b}$ of $b$ such that $V_{_x}\cap V_{_b}=\emptyset$.
By theorem \ref{a3}($\mathfrak{b}$), $d(x)$ is $\tau_{_1}$-closed and 
$i(b)$ is $\tau_{_2}$-closed. Since $\mathfrak{X}$ is joincompact, by \cite[Theorem 3.6(c)]{kop}
we have that
$\mathfrak{X}$ is pairwise normal. Thus, there exist
a $\tau_{_1}$-open set $W$ and 
a $\tau_{_2}$-open set $O$ such that $d(x)\subset O$, $i(b)\subset W$ and $O\cap W=\emptyset$. Using the fact that $d(x)$ is decreasing and applying Proposition \ref{a117},
we obtain a decreasing $\tau_{_2}$-open set $\widetilde{O}$ such that 
$d(x)\subset \widetilde{O}\subset O$. By a dual argument from Proposition \ref{a217},
we can get an increasing $\tau_{_1}$-open set $\widetilde{W}$ such that 
$i(b)\subset \widetilde{W}\subset W$. Since $b\not\precsim x$ for any $x\in A$,
we can find a decreasing $\tau_{_2}$-open set $O_{_x}$ containing $x$ and 
an increasing $\tau_{_1}$-open set $W_{_b}$ containing $b$ such that 
$O_{_x}\cap W_{_b}=\emptyset$. Since $\mathfrak{X}$ is joincompact and $A$ is
$\tau_{_1}$-closed, by Proposition \ref{a4}($\mathfrak{d}$), we have that $A$ is $\tau_{_2}$-quasicompact. Therefore, there exists a finite number of $\tau_{_2}$-open sets $O_{_{x_{_i}}}$, $i=\{1,2,...,n\}$ such that $x_{_i}\in A$ and $A\subseteq \displaystyle\bigcup_{_{i\in \{1,2,...,n\}}}O_{_{x_{_i}}}=\widehat{O}$.
Let $\widehat{W}=\{\displaystyle\bigcap_{_{i\in\{1,2,...,n\}}}W^{x_{_i}}_{_b}\vert O_{_{x_{_i}}}\cap W^{x_{_i}}_{_b}=\emptyset\}.$ It is then clear that $\widehat{W}$ is an increasing $\tau_{_1}$-open set containing $b$ and $\widehat{O}$ is a decreasing $\tau_{_2}$-open set containing $A$  such that
$\widehat{O}\cap \widehat{W}=\emptyset$.
Now, for each $y\in B$, this point $y$ and the set $A$ are in exactly
the same relation as $b$ and $x$ in the preceding procedure. Similarly, 
we find a $\tau_{_1}$-open set $O_{_1}$ and 
a $\tau_{_2}$-open set $O_{_2}$ such that $A\subset O_{_1}$, $B\subset O_{_2}$
and $O_{_1}\cap O_{_1}=\emptyset$.
\end{proof}

\begin{proposition}\label{a112} {Let $\mathfrak{X}=(X,\tau_1,\tau_2,\precsim)$ be a joincompact bitopological ordered space and let $a, b\in X$ such that
$a\not\precsim b$. Then, there exists an increasing $\tau_1$-lower and $\tau_2$-upper semicontinous function $f$ on $X$ such that $f(a)>f(b)$.}
\end{proposition}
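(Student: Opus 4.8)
The plan is to reduce the statement to the bitopological Urysohn-type separation theorem established earlier (the existence, in a pairwise normally ordered space, of an increasing, $\tau_{_1}$-lower and $\tau_{_2}$-upper semicontinuous function separating a decreasing $\tau_{_1}$-closed set from an increasing $\tau_{_2}$-closed set). First I would set up the ambient hypotheses: since $\mathfrak{X}$ is joincompact, Proposition \ref{a2} guarantees it is in fact a bitopological \emph{ordered} space, and Theorem \ref{a7} then shows it is pairwise normally ordered. Thus the full strength of the Urysohn-type theorem is at our disposal.

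Next I would produce the two sets to be separated. Put $A=d(b)=\{x\in X:\ x\precsim b\}$ and $B=i(a)=\{x\in X:\ a\precsim x\}$. By construction $A$ is decreasing and $B$ is increasing. Since $\precsim$ is $\tau_{_1}\times\tau_{_2}$-closed (as $\mathfrak{X}$ is a bitopological preordered space), Theorem \ref{a3}($\mathfrak{b}$) yields that $A=d(b)$ is $\tau_{_1}$-closed and $B=i(a)$ is $\tau_{_2}$-closed, so both sets have exactly the regularity required by the separation theorem.

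The crucial point is disjointness, and this is precisely where the hypothesis $a\not\precsim b$ enters: if some $z$ belonged to $A\cap B$, then $a\precsim z$ and $z\precsim b$, whence transitivity would force $a\precsim b$, a contradiction. Hence $A\cap B=\emptyset$. Applying the Urysohn-type theorem to the disjoint pair $(A,B)$ produces an increasing, $\tau_{_1}$-lower semicontinuous and $\tau_{_2}$-upper semicontinuous function $f:X\to[0,1]$ with $f(A)=0$ and $f(B)=1$. Because $b\in d(b)=A$ by reflexivity we get $f(b)=0$, and because $a\in i(a)=B$ we get $f(a)=1$; therefore $f(a)=1>0=f(b)$, which is exactly the desired conclusion.

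There is no genuine analytic obstacle here, since all the real work is carried by the earlier normality theorem (Theorem \ref{a7}) and the Urysohn-type construction (Theorem \ref{a2}); the only thing demanding care is the \emph{orientation} of the separation. One must assign the value $0$ to the downward set $d(b)$ containing $b$ and the value $1$ to the upward set $i(a)$ containing $a$, so that the strict inequality comes out as $f(a)>f(b)$ rather than reversed. Had the two sets been swapped, the Urysohn function would be monotone in the wrong direction and the strictness would fail to land on the correct point.
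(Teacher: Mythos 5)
Your proposal is correct and follows essentially the same route the paper intends: joincompactness gives a bitopological ordered space that is pairwise normally ordered (Theorem \ref{a7}), and the Urysohn-type separation theorem applied to the disjoint pair $A=d(b)$ (decreasing, $\tau_{_1}$-closed) and $B=i(a)$ (increasing, $\tau_{_2}$-closed) produces the required increasing $\tau_{_1}$-lower and $\tau_{_2}$-upper semicontinuous function. In fact your write-up repairs several defects in the paper's own proof: the paper takes $d(a)$ and $i(a)$ rather than $d(b)$ and $i(a)$, never verifies disjointness (the only place the hypothesis $a\not\precsim b$ is actually used), cites part ($\mathfrak{c}$) of Theorem \ref{a3} where part ($\mathfrak{b}$) is the one needed, and concludes with $f(a)=0<1=f(b)$, which is the reverse of the inequality $f(a)>f(b)$ asserted in the statement --- precisely the orientation pitfall you flag in your final paragraph.
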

\begin{proof} By Theorem \ref{a3}($\mathfrak{b}$) we have that $d(b)$ is a decreasing
$\tau_{_1}$-closed set and 
$i(a)$ is an increasing
$\tau_{_2}$-closed set. But then, Theorems \ref{a231} and \ref{a7} imply that
there exists a increasing
$\tau_{_1}$-lower semicontinuous and 
$\tau_{_2}$-upper semicontinuous function such that $f(b)=0<1=f(a)$.
\end{proof}

\begin{proposition}\label{a2}{\rm Let $\mathfrak{X}=(X,\tau_{_1},\tau_{_2},\precsim)$
be a bitopological ordered space and let $\mathfrak{I}=(\mathbb{R},\mathfrak{U},\mathfrak{L},\leq)$,
where $\mathfrak{U}$, $\mathfrak{L}$ are the upper and lower topologies in $\mathbb{R}$, and $\leq$ 
is the usual order in $\mathbb{R}$. 
If $\mathfrak{X}$ is joincompact and 
$f:\mathfrak{X}\to \mathfrak{I}$
is $\tau_{_1}$-lower and $\tau_{_2}$-upper semicontinuous
function, then there exist points at which the function takes its maximum and minimum value.
}
\end{proposition}
\begin{proof} Let's assume that $f$ is $\tau_{_1}$-lower and $\tau_{_2}$-upper semicontinuous
function on $X$.
Since $\mathfrak{X}$ is joincompact,
$f(X)$ is joincompact and hence it is compact in the weaker 
topologies $\mathfrak{U}$ and $\mathfrak{L}$. Therefore, 
$f(X)$ is $\mathfrak{U}$-closed and $\mathfrak{L}$-closed. Since the topologies $\mathfrak{U}$ and $\mathfrak{L}$
are compact $f$ is bounded, that is, $m\leq f(x)\leq M$ for all $x\in X$ and $m, M\in R$.
Let $\lambda$ be the infimum of set of values of $f(x)$ on $X$. Suppose to the contrary that $f(x)\neq \lambda$ for all $x\in X$.
Let $h(x)=\displaystyle\frac{1}{f(x)-\lambda}$ ($h(x)>0$). 
Then,  $h$ is 
$\tau_{_1}$-lower and $\tau_{_2}$-upper semicontinuous
function on $X$. As in the case of $f$, we conclude that $h$ is bounded. 
Because $\lambda$ is the infimum of $f(x)$ values, given $\epsilon>0$, we can find a value $f(x)<\lambda+\epsilon$.
Hence, $h(x)=\displaystyle\frac{1}{f(x)-\lambda}>\displaystyle\frac{1}{\epsilon}$.
Since $\epsilon$ is arbitrary the inequality 
$h(x)>\displaystyle\frac{1}{\epsilon}$ shows that $h(x)$ is unbounded in $X$, a contradiction.
Therefore,
$\lambda=f(x)$ for some $x\in X$.
Similarly, we conclude that if $\mu=\sup f(X)$ then $\mu=f(x)$ for some $x\in X$.
\end{proof}

Based on the previous proposition, 
in the following where the space $\mathfrak{X}=(X,\tau_{_1},\tau_{_2},\precsim)$ is joincompact, it will also be valid that the 
$\tau_{_1}$-lower and $\tau_{_2}$-upper semicontinuous
functions defined in it are bounded.

\section{Main result}

\begin{definition}\label{a222}{\rm
A function $u$ on $\mathfrak{X}=(X,\tau_{_1},\tau_{_2},\precsim)$ is said to be a
{\it Richter-Peleg utility representation function} for a preorder $\precsim$ on $\mathfrak{X}$ if 
$x \precsim y$ implies $u(x)\leq u(y)$ and 
$x \prec y$ implies $u(x)<u(y)$, where $\prec$ stands for the 
strict part of $\precsim$.
A {\it Richter-Peleg multi-utility representation} $\mathcal{V}$ for a preorder $\precsim$ on 
$\mathfrak{X}$ is a 
multi-utility representation for $\precsim$ such that every function $u\in V$ is a 
Richter-Peleg utility for $\precsim$.
A Richter-Peleg multi-utility representation $\mathcal{V}$
is {\it maximal} if none of its proper supersets has this property.
}
\end{definition}

\begin{proposition}\label{a111} {\rm Let $\mathfrak{X}=(X,\tau_1,\tau_2,\precsim)$ be a joincompact bitopological ordered space. 
Then, $\precsim$ has a
$\tau_1$-lower and $\tau_2$-upper semicontinous Richter-Peleg utility representation.}
\end{proposition}
\begin{proof} By Theorems \ref{a231}, \ref{a7} and Proposition \ref{a2}, $\mathfrak{X}$ has an increasing 
real-valued function $f$ on $X$ such that
$f$ is 
$\tau_{_1}$-lower semicontinuous and 
$\tau_{_2}$-upper semicontinuous. Therefore, if $x\precsim y$ the we have $f(x)\leq f(y)$.
On the other hand, if $x\prec y$, then $y\not\precsim x$ and by Proposition
\ref{a112} we have that $f(x)<f(y)$. It follows that 
$f$ is a $\tau_1$-lower and $\tau_1$-upper semicontinous Richter-Peleg utility representation of $\precsim$.
\end{proof}

According to Proposition \ref{a2}, 
a Richter-Peleg multi-utility representation $\mathcal{V}$ of $\precsim$ in a joincompact bitopological 
ordered space
$\mathfrak{X}=(X,\tau_1,\tau_2,\precsim)$
consists of bounded functions. In what follows, $\mathcal{L}_{_1}\mathcal{U}_{_2}(\mathfrak{X})$
(resp. $\mathcal{B}\mathcal{L}_{_1}\mathcal{U}_{_2}(\mathfrak{X})$)
denotes the family of (resp. bounded) $\tau_{_1}$-lower and 
$\tau_{_2}$-upper semicontinuous
functions of $\mathfrak{X}$ to 
$\mathfrak{I}=(\mathbb{R},\mathfrak{U},\mathfrak{L},\leq)$
and 
$\mathcal{R}\mathcal{P}\mathcal{B}\mathcal{L}_{_1}\mathcal{U}_{_2}(\mathfrak{X})$
denotes the members of $\mathcal{B}\mathcal{L}_{_1}\mathcal{U}_{_2}(\mathfrak{X})$
which are Richter-Peleg utility representations of $\precsim$.
Define the {\it supremum norm} on 
$\mathcal{B}\mathcal{L}_{_1}\mathcal{U}_{_2}(\mathfrak{X})$ as
\begin{center}
$\norm{f}_{_\infty}=\sup \{|f(x)|\ \vert\ x\in X\}$.
\end{center}
For each subfamily $\mathcal{V}$ of $\mathcal{L}_{_1}\mathcal{U}_{_2}(\mathfrak{X})$,
$cl_{_{\mathcal{U}n}}\mathcal{V}$ denotes the closure of 
$\mathcal{V}$
with respect to the usual (uniform) norm topology.

The next proposition is a dual version of Lemma 3.4 in \cite{kel}.

\begin{proposition}\label{lase}{\rm The class $\mathcal{L}_{_1}\mathcal{U}_{_2}(X)$
of real-valued functions which are $\tau_{_1}$-lower semicontinuous and 
$\tau_{_2}$-upper semicontinuous on a space $\mathfrak{D}=(X,\tau_{_1},\tau_{_2})$ is complete
with respect to the uniform norm on $X$.}
\end{proposition}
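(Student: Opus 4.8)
The plan is to exhibit $\mathcal{LU}(X)$ as a closed subset of the space of bounded real-valued functions on $X$ under the supremum norm, and then invoke the fact that a closed subset of a complete space is complete. First I would recall the classical fact that the space $B(X)$ of bounded real-valued functions on $X$, equipped with $\norm{\cdot}_{_\infty}$, is complete: a Cauchy sequence $(f_n)$ is uniformly Cauchy, hence converges pointwise by completeness of $\mathbb{R}$ to a function $f$, and a routine estimate shows this convergence is uniform, so that $f\in B(X)$ and $\norm{f_n-f}_{_\infty}\to 0$. Consequently it suffices to prove that the uniform limit $f$ of a sequence $(f_n)\subset \mathcal{LU}(X)$ is again $\tau_1$-lower and $\tau_2$-upper semicontinuous.

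The core of the argument is the stability of one-sided semicontinuity under uniform convergence, treated separately in each of the two topologies. For the lower part, fix $x_0\in X$ and $\varepsilon>0$ and choose $N$ with $\norm{f_N-f}_{_\infty}<\varepsilon/3$. By $\tau_1$-lower semicontinuity of $f_N$ at $x_0$ there is a $\tau_1$-neighborhood $U$ of $x_0$ with $f_N(x)>f_N(x_0)-\varepsilon/3$ for all $x\in U$. Then for every $x\in U$,
\[
f(x)>f_N(x)-\varepsilon/3>f_N(x_0)-2\varepsilon/3>f(x_0)-\varepsilon,
\]
so $f$ is $\tau_1$-lower semicontinuous at $x_0$. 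Dually, choosing a $\tau_2$-neighborhood $U'$ of $x_0$ on which $f_N(x)<f_N(x_0)+\varepsilon/3$, the estimate
\[
f(x)<f_N(x)+\varepsilon/3<f_N(x_0)+2\varepsilon/3<f(x_0)+\varepsilon
\]
gives $\tau_2$-upper semicontinuity of $f$ at $x_0$. Since $x_0$ was arbitrary, $f\in \mathcal{LU}(X)$.

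Combining the two steps, every Cauchy sequence in $\mathcal{LU}(X)$ converges in $B(X)$ to a function lying in $\mathcal{LU}(X)$, so $\mathcal{LU}(X)$ is closed in the complete space $B(X)$ and hence complete. I do not expect a genuine obstacle; the only points requiring attention are bookkeeping ones—drawing each neighborhood from the correct topology ($\tau_1$ for the lower bound and $\tau_2$ for the upper bound), and using the uniform rather than merely pointwise control $\norm{f_N-f}_{_\infty}<\varepsilon/3$ at both $x$ and $x_0$ so that the three error terms sum to $\varepsilon$.
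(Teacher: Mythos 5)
Your argument is correct and is the standard one: the paper in fact states this proposition without proof, presenting it as the dual of Lemma 3.4 of Kelly \cite{kel}, and the three-epsilon preservation of one-sided semicontinuity under uniform limits that forms the core of your proof is exactly the computation the paper itself reproduces in part ($\mathfrak{c}$) of the proof of Lemma \ref{a098}. The only caveat is your detour through the space of bounded functions: as stated, $\mathcal{L}\mathcal{U}(X)$ need not consist of bounded functions (boundedness is only imposed later, for $\mathcal{B}\mathcal{L}\mathcal{U}(\mathfrak{X})$), but this is harmless since your semicontinuity estimate applies verbatim to the pointwise limit of any uniformly Cauchy sequence, whether or not its members are bounded.
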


\begin{definition}\label{da1}{\rm Let $X$ be a set and let $\mathfrak{F}$ be a family of functions each of which has domain $X$. Then the family $\mathfrak{F}$ is {\it separating}
if, for each pair $x, y\in X$ of distinct elements of $X$, there exists an $f_{_{xy}}$ in
$\mathfrak{F}$ such that $f_{_{xy}}(x)\neq f_{_{xy}}(y)$.
}
\end{definition}

\begin{lemma}\label{a098}{\rm  Let $\mathfrak{X}=(X,\tau_1,\tau_2,\precsim)$ be a bitopological 
joincompact ordered space and
let $\mathcal{V}$ be a $\tau_{_1}$-lower and $\tau_{_2}$-upper semicontinuous 
Richter-Peleg multi-utility representation of $\precsim$.
Then, $cl_{_{\mathcal{U}n}}\mathcal{V}$ is also a 
a Richter-Peleg multi-utility representation of $\precsim$.}
\end{lemma}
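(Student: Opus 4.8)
The plan is to verify separately the two requirements of Definition \ref{a222} for the enlarged family $cl_{_{\mathcal{U}\mathcal{C}}}\mathcal{V}$, using throughout that $\mathcal{V}\subseteq cl_{_{\mathcal{U}\mathcal{C}}}\mathcal{V}$ and that every $u\in cl_{_{\mathcal{U}\mathcal{C}}}\mathcal{V}$ is a uniform limit $u=\lim_{n}v_{n}$ of functions $v_{n}\in\mathcal{V}$. Two preliminary remarks make this set-up legitimate: by the discussion preceding the lemma the members of $\mathcal{V}$, being $\tau_{_1}$-lower and $\tau_{_2}$-upper semicontinuous on a joincompact space, are bounded, so the supremum norm is finite; and by Proposition \ref{lase} the class $\mathcal{L}\mathcal{U}(X)$ is uniformly complete, so each limit $u$ is again $\tau_{_1}$-lower and $\tau_{_2}$-upper semicontinuous. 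Hence $cl_{_{\mathcal{U}\mathcal{C}}}\mathcal{V}$ at least lives in the correct ambient class.

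First I would show that every $u\in cl_{_{\mathcal{U}\mathcal{C}}}\mathcal{V}$ is increasing. Fixing $u=\lim_{n}v_{n}$ and a pair $x\precsim y$, each $v_{n}$ is increasing, so $v_{n}(x)\le v_{n}(y)$, and passing to the limit gives $u(x)\le u(y)$. This single observation already delivers the multi-utility property of the closure: the forward implication $x\precsim y\Rightarrow w(x)\le w(y)$ for all $w\in cl_{_{\mathcal{U}\mathcal{C}}}\mathcal{V}$ is exactly what was just shown, while for the converse, if $w(x)\le w(y)$ holds for every $w\in cl_{_{\mathcal{U}\mathcal{C}}}\mathcal{V}$ then in particular it holds for every $v\in\mathcal{V}$, and since $\mathcal{V}$ is already a multi-utility representation we obtain $x\precsim y$. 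Thus the multi-utility part comes essentially for free from the inclusion $\mathcal{V}\subseteq cl_{_{\mathcal{U}\mathcal{C}}}\mathcal{V}$.

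The hard part will be the strict condition: showing that each $u\in cl_{_{\mathcal{U}\mathcal{C}}}\mathcal{V}$ satisfies $x\prec y\Rightarrow u(x)<u(y)$. This is where I expect the main obstacle, since a uniform limit transmits only the weak inequalities $v_{n}(x)\le v_{n}(y)$, whereas a priori the strict gaps $v_{n}(y)-v_{n}(x)>0$ could shrink to $0$, collapsing to $u(x)=u(y)$. The only route I can see is to produce a lower bound on the gap that is independent of the approximating sequence. Concretely, given $x\prec y$ one has $y\not\precsim x$, so $d(x)$ and $i(y)$ are disjoint and respectively $\tau_{_1}$-closed and $\tau_{_2}$-closed by Theorem \ref{a3}, and Theorem \ref{a7} together with Proposition \ref{a112} separates them by an increasing semicontinuous function with $f(x)<f(y)$. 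The aim would be to leverage joincompactness to upgrade this qualitative separation to a quantitative one, namely a constant $\delta>0$ with $v(y)-v(x)\ge\delta$ for all $v\in\mathcal{V}$, which then survives the limit and forces $u(y)-u(x)\ge\delta>0$.

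I should stress that this last step is precisely the crux of the lemma and the one point at which the argument must make essential use of joincompactness. Establishing such a uniform gap $\delta>0$ across the \emph{entire} family $\mathcal{V}$ is far from automatic; if the positive differences $v(y)-v(x)$ are not bounded away from $0$ over all of $\mathcal{V}$, the strict inequality need not be inherited by the limit $u$. I would therefore scrutinise this step most carefully, since it is simultaneously the technical heart of the proof and the natural place for any hidden hypothesis on $\mathcal{V}$ or on $\mathfrak{X}$ to be required.
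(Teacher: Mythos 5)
Your treatment of everything except the strict inequality is correct and follows essentially the paper's own route: the paper likewise takes a uniform limit $g$ of a sequence $(f_{n})\subset\mathcal{V}$, proves boundedness, increasingness (there by contradiction, where you pass the weak inequalities $f_{n}(x)\le f_{n}(y)$ directly to the limit), and the bi-semicontinuity of $g$ (there by an explicit $\varepsilon/3$ argument, where you invoke the uniform completeness of Proposition \ref{lase}); and the non-strict biconditional for $cl_{_{\mathcal{U}\mathcal{C}}}\mathcal{V}$ does come for free from the inclusion $\mathcal{V}\subseteq cl_{_{\mathcal{U}\mathcal{C}}}\mathcal{V}$, exactly as you say.

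The step you declined to claim, namely $x\prec y\Rightarrow u(x)<u(y)$ for every uniform limit $u$, is a genuine gap --- and you should know that the paper's own proof does not close it either: after establishing that the limit $g$ lies in $\mathcal{I}\mathcal{B}\mathcal{L}\mathcal{U}(\mathfrak{X})$, it simply asserts ``It follows that \dots $x<y$ if and only if $f(x)<f(y)$ for all $f\in cl_{_{\mathcal{U}\mathcal{C}}}\mathcal{V}$'' with no argument for the strict implication. Moreover, your suspicion that no uniform gap $\delta>0$ can be extracted from joincompactness is correct, because the statement is in fact false without an additional hypothesis. Take $X=\{a,b\}$ with $\tau_{1}=\tau_{2}$ the discrete topology and $\precsim$ the partial order generated by $a\prec b$; this is a joincompact bitopological ordered space, and the family $\mathcal{V}=\{v_{n}\mid n\ge 1\}$ with $v_{n}(a)=0$, $v_{n}(b)=1/n$ consists of bounded, trivially bi-semicontinuous, strictly increasing functions and is a Richter--Peleg multi-utility representation of $\precsim$. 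Since $\norm{v_{n}}_{_{\infty}}=1/n\to 0$, the zero function belongs to $cl_{_{\mathcal{U}\mathcal{C}}}\mathcal{V}$, and it satisfies $a\prec b$ with equal values, so it is not a Richter--Peleg utility: the closure remains a multi-utility representation but loses the Richter--Peleg property. So your proposal cannot be completed as written --- not because your strategy is misguided, but because the lemma as stated needs a hidden hypothesis (some uniform strict separation over $\mathcal{V}$, or a restriction to limits that stay strictly increasing), which is precisely the conclusion your final paragraph points to.
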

\begin{proof} Let $f$ belongs to the closure of 
$\mathcal{V}$
with respect to the uniform norm topology on $\mathfrak{X}$. 
Then,
there exists a sequence $(f_{_n})_{_{n\in\mathbb{N}}}\in \mathcal{V}$ such that
$f_{_n}$ converges to $f$ in the uniform norm topology.
We prove that $f$ is a bounded, increasing, order preserving, $\tau_{_1}$-lower and $\tau_{_2}$-upper 
semicontinuous function.
Uniform convergence implies that for any $\varepsilon>0$ there is an $N_{_{\varepsilon}}\in\mathbb{N}$ such that 
$|f_{_n}(x)-f(x)|<\frac{\varepsilon}{3}$
for all $n\geq N_{_{\varepsilon}}$ and all $x\in X$. 
\par\smallskip\noindent
($\mathfrak{a}$) To prove that $f$ is bounded, let $\varepsilon=3$. 
Since each $f_{_n}$ is bounded on $X$, there exists $M_{_n}$ such that $|f_{_n}(x)|\leq M_{_n}$ for all $x\in X$. By uniform convergence, there exists an $N_{_3}\in\mathbb{N}$ such that $|f_{_n}(x)-f(x)|\leq 1$ for all $n\geq N_{_3}$. Then, 
$|f(x)|-|f_{_{N_{_3}}}(x)|\leq |f(x)-f_{_{N_{_3}}}(x)|\leq  1$ which implies that 
$|f(x)|\leq |f_{_{N_{_3}}}(x)|+1\leq M_{_{N_{_3}}}+1$.
\par\smallskip\noindent
($\mathfrak{b}$) To demonstrate that $f$ is a Richter-Peleg utility function, 
we begin by proving that if $a\sim b$ for some $a, b$ in X, then $f(a)=f(b)$.
Suppose to the contrary that 
$f(a)\neq f(b)$.
Since the sequence $(f_{_n})_{_{n\in\mathbb{N}}}$ converges uniformly to $f$, we have that
$f_{_n}(a)\to f(a)$, $f_{_n}(b)\to f(b)$ and for each $n\in \mathbb{N}$, $f_{_n}(a)=f_{_n}(b)$.
Therefore, $f_{_n}(a)$ converges 
to both $f(a)$ and $f(b)$, which contradicts the
Hausdorffness of $(X,\tau)$, where $\tau$ is the usual order topology on $\mathbb{R}$.
To prove that $f$ is order preserving, suppose to the contrary that $f(a)\geq f(b)$
for some $a$ and $b$, where $a\prec b$. Let $\varepsilon=f(a)-f(b)$.
Since the sequence $(f_{_n})_{_{n\in\mathbb{N}}}$ converges uniformly to $f$, 
we have that for each $n\geq N_{_\varepsilon}$ there holds
$\sup \{|f(x)-f_{_n}(x)|\vert x\in X\}<\frac{\varepsilon}{3}$.
It follows that $f(a)-f_{_n}(a)<\frac{\varepsilon}{3}$  and 
$f_{_n}(b)-f(b)<\frac{\varepsilon}{3}$ for $n>n_{_0}$. Therefore,
$f_{_n}(a)-f_{_n}(b)>-\frac{2\varepsilon}{3}+(f(a)-f(b))=\frac{\varepsilon}{3}>0$, 
contradicting the fact that $f_{_n}$ is order preserving.
Hence, $f$ is order preserving.

\par\smallskip\noindent
($\mathfrak{c}$) It remains to prove that
$f$ is $\tau_{_1}$-lower and $\tau_{_2}$-upper semicontinuous function.
Fix $x_{_0}$ in $X$ and let $x\in X$.
For any $n\in \mathbb{N}$ we have
\begin{center}
$f(x)-f(x_{_0})=(f(x)-f_{_n}(x))+(f_{_n}(x)-f_{_n}(x_{_0}))+(f_{_n}(x_{_0})-f(x_{_0})\leq$\\
$2\sup \{|f_{_n}(x)-f(x)|\vert x\in X\}+(f_{_n}(x)-f_{_n}(x_{_0}))$.
\end{center}
Therefore, for any $n$ we have,
\begin{center}
$f(x)-f(x_{_0})\leq 2\sup \{|f_{_n}(x)-f(x)|\vert x\in X\}+(f_{_n}(x)-f_{_n}(x_{_0}))$.
\end{center}
Since the previous inequality is true for an arbitrary value of $n$, we can choose 
a positive number $\varepsilon>0$, as small as we like. 
Therefore, there must be an $N>0$ for which we have 
$\sup \{|f_{_n}(x)-f(x)|\vert x\in X\}<\frac{\varepsilon}{3}$ for all $n>N$.
Because
$f_{_N}(x)$ is $\tau_{_2}$-upper semicontinuous, so for any choice of the number
$\varepsilon>0$ there is $\tau_{_2}$-open neighborhood $\mathcal{M}$ 
such that for all $x\in \mathcal{M}$,
$f_{_N}(x)-f_{_N}(x_{_0})<\frac{\varepsilon}{3}$.
Using this, we now have
\begin{center}
$f(x)-f(x_{_0})\leq 2\sup \{|f_{_n}(x)-f(x)|\vert x\in X\}+(f_{_n}(x)-f_{_n}(x_{_0}))<
2\frac{\varepsilon}{3}+\frac{\varepsilon}{3}=\varepsilon$
\end{center}
for all $x\in \mathcal{M}$. Therefore there exists a $\tau_{_2}$-open neighborhood 
$\mathcal{M}\subseteq X$ of $x_{_0}$ such that
for each $x\in \mathcal{M}$ we have $f(x)<f(x_{_0}+\varepsilon$.
It follows that $f$ is $\tau_{_2}$-upper semicontinous.
Similarly, interchanging the order of the difference of $f(x)$ and $f(x_{_0})$
we can prove that
$f$ is $\tau_{_1}$-lower semicontinous. 
Hence,
$cl_{_{\mathcal{U}n}}\mathcal{V}$ is also
a $\tau_{_1}$-lower and $\tau_{_2}$-upper semicontinous
Richter-Peleg multi-utility representation of $\precsim$ and this complete the proof.

Obviously, if members of $\mathcal{V}$ are bounded, so will members of $cl_{_{\mathcal{U}n}}\mathcal{V}$.
\end{proof}

\begin{proposition}\label{a812} {Let $\mathfrak{X}=(X,\tau_1,\tau_2,\precsim)$ be a 
joincompact bitopological ordered space and $\mathcal{L}$ be
a closed, with respect to uniform norm topology, sublattice of 
$\mathcal{B}\mathcal{L}_{_1}\mathcal{U}_{_2}(\mathfrak{X})$.
Then, a $\tau_1$-lower and $\tau_2$-upper semicontinous function $\phi$
belongs to
 $\mathcal{L}$
if and only if for each
$x, y\in X$ there exists $f_{_{xy}}\in \mathcal{L}$ such that 
$f_{_{xy}}(x)=\phi(x)$ and $f_{_{xy}}(y)=\phi(y)$.}
\end{proposition}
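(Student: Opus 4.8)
The plan is to recognize this statement as the lattice form of the Stone--Weierstrass theorem (the Kakutani--Krein theorem), transported to the bitopological setting. The structural fact I would establish first is that every function in $\mathcal{B}\mathcal{L}\mathcal{U}(\mathfrak{X})$ --- and in particular every member of $\mathfrak{L}$ together with $\phi$ --- is continuous as a map from $(X,\tau_{_1}\vee\tau_{_2})$ into $\mathbb{R}$ with its usual topology: being $\tau_{_1}$-lower and $\tau_{_2}$-upper semicontinuous says exactly that preimages of the sub-basic sets $(a,\infty)$ and $(-\infty,a)$ are $\tau_{_1}$- and $\tau_{_2}$-open, hence $(\tau_{_1}\vee\tau_{_2})$-open. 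Since $\mathfrak{X}$ is joincompact, $(X,\tau_{_1}\vee\tau_{_2})$ is compact, and it is Hausdorff by the joincompactness hypothesis. I would also record that pointwise $\wedge$ and $\vee$ preserve both $\tau_{_1}$-lower and $\tau_{_2}$-upper semicontinuity, so the sublattice $\mathfrak{L}$ is genuinely closed under pointwise minimum and maximum.

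Necessity is immediate: if $\phi\in cl_{_{\mathcal{U}\mathcal{C}}}(\mathfrak{L})=\mathfrak{L}$ then $f_{_{xy}}=\phi$ serves for every pair. For sufficiency I would run the classical two-stage covering argument. Fix $\varepsilon>0$. First, fix $x\in X$; for each $y$ choose $f_{_{xy}}\in\mathfrak{L}$ with $f_{_{xy}}(x)=\phi(x)$ and $f_{_{xy}}(y)=\phi(y)$, and set $V_{_y}=\{z\mid f_{_{xy}}(z)<\phi(z)+\varepsilon\}$. As $f_{_{xy}}-\phi$ is $(\tau_{_1}\vee\tau_{_2})$-continuous, $V_{_y}$ is open in that topology and contains $y$. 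Joincompactness yields a finite subcover $V_{_{y_{_1}}},\dots,V_{_{y_{_n}}}$, and $g_{_x}:=f_{_{xy_{_1}}}\wedge\cdots\wedge f_{_{xy_{_n}}}\in\mathfrak{L}$ satisfies $g_{_x}(x)=\phi(x)$ and $g_{_x}<\phi+\varepsilon$ everywhere. Second, the sets $W_{_x}=\{z\mid g_{_x}(z)>\phi(z)-\varepsilon\}$ are $(\tau_{_1}\vee\tau_{_2})$-open and cover $X$; a finite subcover $W_{_{x_{_1}}},\dots,W_{_{x_{_m}}}$ produces $g:=g_{_{x_{_1}}}\vee\cdots\vee g_{_{x_{_m}}}\in\mathfrak{L}$ with $\phi-\varepsilon<g<\phi+\varepsilon$ pointwise, that is $\norm{g-\phi}_{_\infty}\le\varepsilon$. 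Letting $\varepsilon\to 0$ and using that $\mathfrak{L}$ is uniformly closed places $\phi$ in $cl_{_{\mathcal{U}\mathcal{C}}}(\mathfrak{L})$.

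The step I expect to be the main obstacle --- and the only place the bitopological hypotheses really bite --- is guaranteeing that the level sets $V_{_y}$ and $W_{_x}$ are open in a topology for which $X$ is compact. One cannot argue inside $\tau_{_1}$ or $\tau_{_2}$ separately, because a difference such as $f_{_{xy}}-\phi$ mixes $\tau_{_2}$-upper and $\tau_{_1}$-lower data and is semicontinuous in neither single topology. The resolution is precisely the opening reduction: all functions involved are continuous for the joined topology $\tau_{_1}\vee\tau_{_2}$, and joincompactness is exactly the assertion that $X$ is compact in that topology, so the two finite subcovers exist. After that, the closure of $\mathfrak{L}$ under $\wedge$, $\vee$ and uniform limits finishes the argument routinely.
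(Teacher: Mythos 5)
Your proof is correct and takes essentially the same route as the paper's: the same two-stage covering (Kakutani--Krein) argument, hinging on the observation that each difference $f_{_{xy}}-\phi$ is $(\tau_{_1}\vee\tau_{_2})$-continuous so that joincompactness supplies the two finite subcovers, followed by the pointwise meet and join inside the uniformly closed lattice $\mathfrak{L}$. Your explicit opening reduction (preimages of $(a,\infty)$ and $(-\infty,a)$ being $\tau_{_1}$- and $\tau_{_2}$-open, hence $(\tau_{_1}\vee\tau_{_2})$-open) is merely a cleaner spelling-out of what the paper asserts in passing.
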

\begin{proof}{\it Necessity}:
If $\phi\in\mathcal{L}$, then $\phi$ itself satisfies the requirements of the proposition.
\par\noindent
{\it Sufficiency}: Suppose that $\phi$ is a $\tau_1$-lower and $\tau_2$-upper semicontinous function
on $\mathfrak{X}$.
Let $x, y \in X$ with $x\neq y$ and let $f_{_{xy}}\in \mathcal{L}$ such that
$f_{_{xy}}(x)=\phi(x)$, $f_{_{xy}}(y)=\phi(y)$. If $\phi \in \mathcal{L}$, then we have nothing to prove.
Otherwise
assume that a
$\phi \notin \mathcal{L}$. So, $f_{_{xy}}$ approximates $\phi$
in neighborhoods around $x$ and $y$. If $\phi(x)=\phi(y)$ then we can take a constant.
If not, since $\mathcal{L}$ is closed,
it suffices to show that for each $\varepsilon>0$ there exists $f\in \mathcal{L}$ such that
for all $z\in X$ we have
\begin{center}
$\varphi(z)-\varepsilon<f(z)<\varphi(z)+\varepsilon$,
\end{center}
or equivalently
\begin{center}
$\sup\{|\varphi(z)-f(z)|\vert z\in X\}<\varepsilon$,
\end{center}
for it will follow from this that $\norm{f-\phi}_{_{\infty}}<\varepsilon$.
Now, fix an $x\in X$, and let $y\in X$ vary. Put
\begin{center}
$U_{_y}=\{z\in X\vert \ f_{_{xy}}(z)<\phi(z)+\varepsilon\}=\{z\in X\vert \ f_{_{xy}}(z)-\phi(z)<\varepsilon\}$.
\end{center}
Since $f_{_{xy}}-\varphi$ is $\tau_{_1}\vee \tau_{_2}$-lower semicontinuous and 
$\tau_{_1}\vee \tau_{_2}$-upper semicontinuous we conclude that
 $f_{_{xy}}-\varphi$ is $\tau_{_1}\vee \tau_{_2}$-continuous.
Then, $U_{_y}$ is $\tau_{_1}\vee \tau_{_2}$-open set because $f_{_{xy}}-\varphi$ is $\tau_{_1}\vee \tau_{_2}$-continuous.
Also, $y\in U_{_y}$ and thus $\{U_{_y}\vert y\in X\}$ is a 
$\tau_{_1}\vee \tau_{_2}$-open cover of $X$. 
Since $\mathfrak{X}$ is joincompact, we have that $X$ is $\tau_{_1}\vee \tau_{_2}$-compact. Hence, there exists finitely many $y_{_1},...,y_{_n}\in X$ such that
\begin{center}
$X=\displaystyle\bigcup_{i\in\{1,...,n\}}U_{_{y_{_i}}}$.
\end{center}
Let $f_{_{xy_{_1}}}, f_{_{xy_{_2}}},...,f_{_{xy_{_n}}}$ are the functions of $\mathcal{L}$ which correspond to the sets $U_{_{y_{_1}}},U_{_{y_{_2}}},$
$...,U_{_{y_{_n}}}$ respectively.
Put
\begin{center}
$g_{_x}=f_{_{xy_{_1}}}\wedge f_{_{xy_{_2}}}\wedge...\wedge f_{_{xy_{_n}}}$.
\end{center}
Since  $\mathcal{L}$ is semi-vector lattice we have that $g_{_x}\in \mathcal{L}$.
On the other hand, $g_{_x}(x)=\phi(x)$ and $g_{_x}(z)<\phi(x)+\varepsilon$ for all
$z\in X$.
We next consider the open set 
\begin{center}
$V_{_x}=\{z\in X\vert \ g_{_x}(z)>\phi(z)-\varepsilon\}$.
\end{center}
Then, $V_{_x}$ is $\tau_{_1}\vee \tau_{_2}$-open set because $g_{_x}$ is 
$\tau_{_1}\vee \tau_{_2}$-continuous. Also, $x\in V_{_x}$ and thus 
$\{V_{_x}\vert x\in X\}$ is a $\tau_{_1}\vee \tau_{_2}$-open cover of $X$. Since $\mathfrak{X}$ is joincompact, we have that $X$ is $\tau_{_1}\vee \tau_{_2}$-compact.
Therefore, this open cover has a finite subcover $\{V_{_{x_{_1}}}, V_{_{x_{_1}}}, ..., V_{_{x_{_n}}}\}$.
We denote the corresponding
functions in 
$\mathcal{L}$ by $g_{_{x_{_1}}}, g_{_{x_{_2}}},...,g_{_{x_{_n}}}$,
and we define $f$ by $f=g_{_{x_{_1}}}\vee g_{_{x_{_2}}}\vee,...,\vee g_{_{x_{_n}}}$.
It is clear that $f\in\mathcal{L}$ with the
property that
\begin{center}
$\varphi(z)-\varepsilon<f(z)<\varphi(z)+\varepsilon$,
\end{center}
for all $z\in X$. Therefore, for each $\varepsilon>0$ we have 
$\norm{f-\phi}_{_{\infty}}<\varepsilon$ and
so our
proof is complete.
\end{proof}

\begin{definition}{\rm Let $\mathcal{L}=(X,\precsim)$ be a lattice. We say that 
$\mathcal{L}$
is a {\it generalized cone lattice} if it is closed under multiplication by scalars and addition
of constants (in short, $f\in \mathcal{L}$ and $a, b\in \mathbb{R}$ imply 
$a f+b\in\mathcal{L}$).}
\end{definition}

\begin{theorem}\label{a119} {Let $\mathfrak{X}=(X,\tau_1,\tau_2,\precsim)$ be a joincompact bitopological ordered space. 
Then, the set of all 
$\tau_{_1}$-lower and $\tau_{_2}$-upper semicontinous
Richter-Peleg utility representations of $\precsim$
is a separating and closed (with respect to uniform norm topology) 
generalized cone lattice of the set of $\tau_{_1}$-lower and $\tau_{_2}$-upper semicontinous
functions on $\mathfrak{X}$.
Conversely, given a joincompact  bitopological space
$\mathfrak{D}=(X,\tau_1,\tau_2)$ and a
separating and closed (with respect to uniform norm topology) 
generalized cone lattice $\mathcal{U}$ of 
$\tau_{_1}$-lower and $\tau_{_2}$-upper semicontinous functions on $\mathfrak{X}$, there is one and only one way
to turn $\mathfrak{D}$ into a joincompact bitopological ordered space $\mathfrak{X}=(X,\tau_1,\tau_2,\precsim)$ where $\mathcal{U}$ is the set of all increasing
$\tau_{_1}$-lower and $\tau_{_2}$-upper semicontinous
Richter-Peleg utility representations of $\precsim$.
}
\end{theorem}
\begin{proof}
Let $\mathfrak{X}=(X,\tau_1,\tau_2,\precsim)$ be a bitopological ordered space  and let 
$\mathcal{V}$ be the set of all 
Richter-Peleg 
utility representation
of
$\precsim$. 
We
will prove that $\mathcal{V}$ satisfies the properties given in the statement of the present proposition. 
\par\smallskip\par\noindent
(i) $\mathcal{V}$ {\it separates points on $X$}.  Indeed, let $x, y\in X$ with $x\not\sim y$. 
Then, $x\not\precsim y$ or $y\not\precsim x$.
By Proposition \ref{a112} we have that $f(y)<f(x)$ or $f(x)<f(y)$ for some
$f\in \mathcal{V}$. Therefore, in any case there exists $f\in \mathcal{V}$
such that $f(x)\neq f(y)$, so $\mathcal{V}$
separates points. 
\par\smallskip\par\noindent
(ii) $\mathcal{V}$ {\it is closed under the uniform norm topology.} 
Let $f\in cl_{_{\mathcal{U}n}}\mathcal{V}$.
By Lemma \ref{a098}
$cl_{_{\mathcal{U}n}}\mathcal{V}\supset \mathcal{V}$
is a Richter-Peleg 
multi-utility representation of $\precsim$, a contradiction to the maximal character of $\mathcal{V}$. Hence, $cl_{_{\mathcal{U}n}}\mathcal{V}=\mathcal{V}$
which implies that $\mathcal{V}$ is closed with respect to uniform norm topology.
\par\smallskip\par\noindent
(iii) $\mathcal{V}$ {\it is a lattice}. Let $f, g \in \mathcal{V}$ and fix 
an $\varepsilon>0$ and a $x_{_0}\in X$. Since $f, g$ are $\tau_{_1}$-lower and $\tau_{_2}$-upper semicontinuous, there are
$\tau_{_1}$-open neighborhods $O, O^{\prime}$
and $\tau_{_2}$-open neighborhods 
$P, P^{\prime}$ such that for all $y\in O, y\in O^{\prime}, y\in P$ and $y\in P^{\prime}$
we have, respectively, that $f(y)<f(x_{_0})+\varepsilon$, 
$f(x_{_0})-\varepsilon <f(y)$,
$g(y)<g(x_{_0})+\varepsilon$ and 
$g(x_{_0})-\varepsilon <g(y)$. 
If $\widetilde{K}=O\cap P$ and 
$\widetilde{K^{\prime}}=O^{\prime}\cap P^{\prime}$, then for each $y\in \widetilde{K}$ we have
that $\sup \{f(y),g(y)\}<\sup\{f(x_{_0}),g(x_{_0})\}+\varepsilon$ and 
for each 
$y\in \widetilde{K^{\prime}}$ we have
that $\inf\{f(x_{_0}),g(x_{_0})\}-\varepsilon<\inf \{f(y),g(y)\}$. 
So, $f\vee g$ is a
$\tau_{_1}$-lower and $\tau_{_2}$-upper semicontinuous function.
On the other hand, since $f, g$ are increasing and order preserving, for each $x_{_1}, x_{_2}\in X$ with
$x_{_1}\precsim x_{_2}$ we have that $\sup \{f(x_{_1}),g(x_{_1})\}\precsim \sup \{f(x_{_2}),g(x_{_2})\}$ and for each $x_{_1}, x_{_2}\in X$ with
$x_{_1}\prec x_{_2}$ we have that $\sup \{f(x_{_1}),g(x_{_1})\}\prec \sup \{f(x_{_2}),g(x_{_2})\}$. 
Thus $f\vee g$ is increasing and order preserving. Since 
$\mathcal{V}$ is maximal with respect to set inclusion, we conclude that 
$f\vee g\in \mathcal{V}$.
Similarly we prove that 
$f\wedge g\in \mathcal{V}$ which implies that $\mathcal{V}$ is a lattice.
\par\smallskip\par\noindent
(iv) $\mathcal{V}$ {\it is a generalized cone lattice}. 
Let
$f$ be an increasing $\tau_{_1}$-lower and $\tau_{_2}$-upper semicontinuous
function on $\mathcal{V}$ and let $\lambda, \kappa\in\mathbb{R}$ with $\lambda\geq 0$.
Clearly, $\lambda f+\kappa$ is increasing. It rermains to prove that
$\lambda f+\kappa\in \mathcal{V}.$
If $\lambda=0$, then $\lambda f+\kappa=\kappa\in \mathcal{V}.$
Otherwise, $\lambda>0$. Fix an $\varepsilon>0$ and an $x_{_0}\in X$.
Since $f$ is $\tau_{_1}$-lower semicontinuous there exists a 
$\tau_{_1}$-open neighborhood $U_{_{x_{_0}}}$ of $x_{_0}$ such that for each 
$y\in U_{_{x_{_0}}}$, $f(y)<f(x_{_0})+\frac{\varepsilon}{\lambda}$ holds.
Let
\begin{center}
$A_{_y}=\{y\in X\vert \lambda f(y)+\kappa<\lambda f(x_{_0})+\kappa+\varepsilon\}$.
\end{center}
Then, $A_{_y}=\{y\in X\vert \lambda f(y)<\lambda f(x_{_0})+\varepsilon\}=
\{y\in X\vert f(y)<f(x_{_0})+\frac{\varepsilon}{\lambda}\}=U_{_{x_{_0}}}$.
It follows that $\lambda f+ \kappa$ is $\tau_{_1}$-lower semicontinuous. Similarly, we prove that $\lambda f+ \kappa$ is $\tau_{_2}$-upper semicontinuous.
\par\smallskip\par\noindent

Conversely, let $\mathcal{U}$ be a
separating and closed (with respect to uniform norm topology) 
generalized cone lattice of $\mathfrak{D}$. The constant functions are in $\mathcal{U}$, so this isn't empty.
Define a order $\precsim$ on $\mathfrak{D}$ as follows:
\begin{center}
$x\sim y$ if $f(x)=f(y)$ and $x\prec y$ if $f(x)<f(y)$
for all $x, y\in X$ and $f\in \mathcal{V}$.
\end{center}
We prove that $\precsim$ is a $\tau_{_1}\times \tau_{_2}$-closed subset of $X\times X$
and thus $\mathfrak{X}=(X,\tau_1,\tau_2,\precsim)$
is a bitopological ordered space. 
Indeed,
Let $a, b\in X$ with $a\not\precsim b$. 
Since $\mathcal{V}$ is separating, there exists at least one 
$f\in\mathcal{U}$ such that $f(a)\neq f(b)$ ($f(a)< f(b)$ or $f(b)< f(a)$).
Let $\mathfrak{F}_{_{a,b}}=\{f\in \mathcal{V}\vert f(a)\neq f(b)\}$.
Then, there exists $f^{\ast}\in \mathfrak{F}_{_{a,b}}$
such that $f^{\ast}(b)<f^{\ast}(a)$, because otherwise, for all $f$ in $\mathcal{V}$ 
there holds $f(a)\leq f(b)$. But then, $a\precsim b$ which is a contradiction to
$a\not\precsim b$.
By the density of reals,
there exists
$r>0$ for which 
$f^{\ast}(b)<r<f^{\ast}(a)$ holds. Let $U_{_b}=(f^{\ast})^{-1}(]-\infty,r[)$ and $U_{_a}=(f^{\ast})^{-1}(]r,\infty[)$. Then,
$U_{_b}$ is a  $\tau_{_2}$-decreasing neighborhood of $b$ and 
$U_{_a}$ is a $\tau_{_1}$-increasing neighborhood of $a$ such that
$U_{_a}\cap U_{_b}=\emptyset$. Hence, by Theorem \ref{a3} we conclude that
$\precsim$ is a $\tau_{_1}\times \tau_{_2}$-closed subset of $X\times X$ which implies that 
$\mathfrak{X}$ is a bitopological ordered space. It remains to prove that $\mathcal{U}$
is the set of all Richter-Peleg utility representations of $\precsim$. 
By definition, all $f\in \mathcal{U}$ are 
Richter-Peleg utility representations of $\precsim$.

Let $\varphi$ be a Richter-Peleg utility representation of $\precsim$ on $\mathfrak{X}$
and let $x, y\in X$. We prove that there exists $h\in \mathcal{U}$
such that $h(x)=\varphi(x)$ and $h(y)=\varphi(y)$, and thus, since  
$\mathcal{U}$ is closed with respect to uniform norm topology, by Proposition \ref{a812} we have that $\varphi\in \mathcal{U}$.
If $\varphi(x)=\varphi(y)=\mu$, then since $\mathcal{U}$ is non-empty,
by taking $\lambda=0$ we observe that the function $h=0.f+\mu$ with $f\in \mathcal{U}$ belongs to $\mathcal{U}$ and $h(x)=\varphi(x)$
and $h(y)=\varphi(y)$.
Now, assume that $\varphi(x)<\varphi(y)$. Then, since $\varphi$ is order preserving we conclude that $y\not\precsim x$. By corollary \ref{a112} there exists an increasing $\tau_{_1}$-lower semicontinuous and $\tau_{_2}$-upper semicontinuous function $f$
such that $f(x)<f(y)$.
Choose the real numbers $\lambda, \kappa$ such that 
$\lambda f(x)+\kappa=\varphi(x)$ and 
$\lambda f(y)+\kappa=\varphi(y)$. Since 
$\lambda=\frac{\varphi(y)-\varphi(x)}{f(y)-f(x)}>0$
and $\mathcal{U}$ is closed with respect to uniform norm topology, 
by Proposition \ref{a812}, we conclude that
$\varphi\in \mathcal{U}$.
It follows that 
$\mathcal{U}$ is the set of all 
$\tau_{_1}$-lower and $\tau_{_2}$-upper semicontinous
Richter-Peleg utility representations of $\precsim$.
\end{proof}

\section{The case of preordered sets}
Let $\mathcal{P}=(X,\precsim)$ be a preordered set.
A subset $D$ of $\mathcal{P}$ is {\it directed} provided it is nonempty, and every finite subset of $D$ has an upper bound in $D$. 
We use $\bigvee^{\mathcal{P}} D$ (resp. $\bigwedge^{\mathcal{P}} D$)
to represent the supremum (resp. infimum) of $D$ if $D$ is a directed set and the supremum (resp. infimum) exists in the preordered set.
A {\it directed
complete preordered set} is a preordered set such that each of
its directed subsets has a supremum. Note that {\it dcpo} normally stands for directed complete partially ordered set (poset).
A directed complete poset which is a lattice is called a {\it directed complete lattice}.
(ii) A {\it complete lattice} $\mathcal{L}$ is a poset in which every subset has a supremum and an infimum.
An {\it ideal} of $\mathcal{P}$ is a directed lower set.
The topology on $\mathcal{P}$ generated by $\{X\setminus \displaystyle\uparrow \{x\}\vert
x\in X\}$
is called the {\it lower topology} and denoted by $\omega(\mathcal{P})=\omega$. 
Let $x, y$ be elements of $X$. 
We say that $x$ is {\it way-below} $y$, written $x\ll y$, if for any directed subset $D$ with
$\bigvee^{\mathcal{P}} D$ exists and $y\precsim \bigvee^{\mathcal{P}} D$, implies that $x\precsim d$ for some $d\in D$.
If  $x\ll y$ then $x\precsim y$ (consider the directed set $D=\{y\}$).
A poset is a {\it continuous poset}  if every element is the join of a directed set of those elements which are way-below it.
A lattice $\mathcal{L}$ is called a {\it continuous lattice} if it is a complete lattice and if every element is the join of those elements which are way-below it.
A subset $A$ of $X$ is called {\it Scott-open} if $A$ is an upper set ($A=\displaystyle\uparrow A$) and for a directed set $D$ with 
$\bigvee^{\mathcal{P}} D\in A$, we have $d\in A$ for some
$d\in D$.
The Scott-open sets satisfy the axioms of a
topology, which we call the {\it Scott topology} and we denote by
$\sigma(\mathcal{P})=\sigma$.
If $\mathcal{P}$ is a continuous poset, then
all sets of the form
$\turnnw{\twoheadleftarrow}{x}=\{y\vert x\ll y\}$
are Scott-open sets. 
The scott topology $\sigma$ on a poset $\precsim$ is always compatible, that is, $\sigma=\precsim_{\sigma}$.
The Scott
topology of the natural order of the real line is the topology of
lower semicontinuity (the topology for which the non-trivial open
sets are the intervals  $(a,\infty)$).
The common refinement $\sigma(\mathcal{P})\vee \omega(\mathcal{P})$ of the Scott and lower topologies
is called the {\it Lawson topology} and is denoted
by $\lambda(\mathcal{P})$(cf. \cite[Definition III-11.5]{GH}).
Given two preordered sets $\mathcal{P}=(X,\precsim)$ and $\mathcal{Q}=(Y,\sqsubseteq)$,
a function $f: \mathcal{P}\to \mathcal{Q}$
is an {\it order embedding} if
for all $x, y\in X$, one has $x\precsim y$ if and only if $f(x) \sqsubseteq f(y)$.
In case of a poset $\mathcal{P}$, this condition forces $f$ to be one-to-one. 
An order embedding is a type of monotone function in order theory that allows one preordered set to be included in another. 
An {\it extension} of a preordered set $\mathcal{P}$
is a pair $(f,\mathcal{Q})$, where $\mathcal{Q}$ is a preordered set and 
$f: \mathcal{P}\to \mathcal{Q}$
is an order embedding. 
A completion of a preordered set $\mathcal{P}$ is an extension 
$(f,\mathcal{Q})$
of
$\mathcal{P}$
such that $\mathcal{Q}$ is a complete lattice.
A subset $I$ of $X$ is an {\it ideal}, if the following conditions hold:
(i) $I$ is non-empty; (ii)
for every $x\in I$, any $y\in X$ and $y\precsim x$ implies that $y\in X$ ($I$ is a lower set), and
for every $x, y\in I$, there is some element $z\in I$, such that 
$x \precsim z$ and $y\precsim z$ ($I$ is a directed set).
The smallest ideal that contains an element $x\in X$ is called a {\it principal ideal}.
This is denoted by
${\displaystyle \downarrow x}=\{y\in X\vert y\precsim x\}$.
Given a subset $A$ of a preordered set $\mathcal{P}=(X,\precsim)$, we denote by 
$A^{\uparrow}$ and $A^{\downarrow}$ the sets of all upper and lower bounds of $A$, respectively.
There are various definitions of a cut of a preordered set $X$ yielding a completion of $X$.
MacNeille \cite{MN} has introduced the famous ``completion by cuts'' for arbitrary preordered sets.
A {\it cut} is a pair $(A,B)$ such that $A=B^{\downarrow}$ and $B=A^{\uparrow}$.
The collection of all cuts, ordered by $(A,B)\leq (C,D)$ if and only if $A\subseteq C$ and $B\subseteq D$
is a complete lattice, called the {\it Dedekind-MacNeille completion} of $\mathcal{P}$.
Any isomorphic copy of Dedekind-MacNeille completion is referred to as the normal completion of $\mathcal{P}$.
Normal completions can be characterized in a number of ways. Because any part of a cut determines the other, 
working with lower cuts is generally more convenient. The normal completion of $\mathcal{P}$ by lower cuts is defined as follows:
Define $A^{\delta}=(A^{\uparrow})^{\downarrow}$,
then the {\it lower cuts completion} of $X$ consists of all subsets $A$ for which
$A^{\delta}=A$.
If $\delta(X)=\{A^\delta\vert A\subseteq X\}$, then
$A^{\delta}\precsim^{\delta}B^{\delta}$ in completion if and only if $A^{\delta}\subseteq B^{\delta}$ as sets. 
The lower cuts completion $(\delta(X), \precsim^{\delta})$ of $\mathcal{P}$
is a complete lattice (\cite[Lemma 1]{abi}).
If $(X,\precsim)$ is a qoset,
then each element $x\in X$  corresponds to its principal ideal
$\downarrow x$
into the lower cuts completion $\delta(X)$. 
The lower cut completion shall be referred to as the {\it normal completion of} $\mathcal{P}$ in the following paragraphs.

Ern\'{e} \cite{ern2} introduced a new way-below relation and 
on the basis of this notion, he defines the concept of precontinuous preordered sets for arbitrary preordered sets. 
The work of Ern\'{e} was influenced by the use of Frink ideals \cite{fri} instead of directed lower sets.

Formally, this notion is defined as follows.
\begin{definition}\label{pas}{\rm (\cite{fri}) A subset $I$ of a preordered set $\mathcal{P}=(X,\precsim)$ is called a {\it Frink ideal} in $X$ if $\delta (Z)\subseteq I$ for all finite
subsets $Z\subseteq I$.
In what follows, $Fid(X,\precsim)$
denote the set of all Frink ideals of $\mathcal{P}$.
We say that
$\mathcal{P}$ is {\it precontinuous} if and only if for each $x\in X$ there is a smallest $I\in Fid(X,\precsim)$
such that  $x\in I^{\uparrow\downarrow}$. }
\end{definition}

\begin{remark}\label{opa2}{\rm In a continuous lattice $\mathcal{L}$, $\turnnw{\twoheadrightarrow}{x}$ 
is automatically directed and thus we may write the axiom of approximation, which defines continuous lattices, as 
\begin{center}
$x=sup \turnnw{\twoheadrightarrow}{x}=sup\{u\in \mathcal{L}\vert u\ll x\}$ 
\end{center}
or as
\begin{center}
whenever $x\not\precsim y$, then there is a $u\ll x$ with $u\not\precsim y$.
\end{center}

}
\end{remark}

\begin{proposition}\label{vjh}{\rm Let $\mathcal{L}=(X,\precsim)$ be a continuous lattice. Then, 
$\mathfrak{X}=(X,\sigma(\mathcal{L}),$
$\omega(\mathcal{L}),\precsim)$ is a joincompact bitopological ordered space.}
\end{proposition}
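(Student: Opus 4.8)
The statement bundles two assertions: first, that the order $\precsim$ is $\sigma\times\omega$-closed, so that $\mathfrak{X}=(X,\sigma,\omega,\precsim)$ is a genuine bitopological preordered space; and second, that this bispace is joincompact, i.e.\ that the join $\sigma\vee\omega$ is compact. I would treat these separately, and the first is the delicate one. Throughout I would use that $\sigma\vee\omega$ is exactly the \emph{Lawson topology} of $(X,\precsim)$, that every $\sigma$-open set is an upper (increasing) set, and that every $\omega$-open set is a lower (decreasing) set, together with the resulting identities $cl_{_\sigma}\{y\}=\mathord\downarrow y$ and $cl_{_\omega}\{y\}=\mathord\uparrow y$.

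For joincompactness I would show that $(X,\sigma\vee\omega)$ is compact by Alexander's subbasis lemma applied to the subbasis $\sigma\cup\{X\setminus\mathord\uparrow x\mid x\in X\}$. Given a cover by subbasic open sets, split it into a family $\{U_{_i}\}\subseteq\sigma$ together with sets $X\setminus\mathord\uparrow x_{_j}$, and set $a=\bigvee_{_j} x_{_j}$, which exists by completeness. Since $a\in\bigcap_{_j}\mathord\uparrow x_{_j}$, the covering condition forces $a\in U_{_{i_0}}$ for some $i_0$. The finite joins of the $x_{_j}$ form a directed set with supremum $a$, so Scott-openness of $U_{_{i_0}}$ yields a finite join $y=x_{_{j_1}}\vee\cdots\vee x_{_{j_k}}\in U_{_{i_0}}$. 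Then $\{U_{_{i_0}},X\setminus\mathord\uparrow x_{_{j_1}},\dots,X\setminus\mathord\uparrow x_{_{j_k}}\}$ already covers $X$, because any $z$ lying in all of the $\mathord\uparrow x_{_{j_\ell}}$ dominates $y$ and hence lies in the upper set $U_{_{i_0}}$. This contradicts the absence of a finite subcover, so the join is compact and $\mathfrak{X}$ is joincompact; this half is soft and uses only completeness.

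For the closedness of $\precsim$ I would invoke the separation criterion of Theorem~\ref{a3}$(\mathfrak{a})$: it suffices, for each pair with $a\not\precsim b$, to produce an increasing $\sigma$-neighborhood of $a$ and a decreasing $\omega$-neighborhood of $b$ that are disjoint. This is where the lattice structure enters. Writing each element as the join of the elements way-below it, $a=\bigvee\{c\mid c\ll a\}$, I observe that if every $c\ll a$ satisfied $c\precsim b$, then $a\precsim b$, contradicting $a\not\precsim b$; hence some $c$ satisfies $c\ll a$ and $c\not\precsim b$. Taking $U=\{y\mid c\ll y\}$, which is Scott-open and contains $a$, and $W=X\setminus\mathord\uparrow c$, which is $\omega$-open and contains $b$ since $c\not\precsim b$, gives the required separation: any $z\in U\cap W$ would satisfy $c\ll z$, hence $c\precsim z$, hence $z\in\mathord\uparrow c$, contradicting $z\in W$. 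As $U$ is increasing and $W$ decreasing, Theorem~\ref{a3}$(\mathfrak{a})$ delivers that $\precsim$ is $\sigma\times\omega$-closed.

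The main obstacle is precisely this last separation. The two steps it rests on, namely the approximation $a=\bigvee\{c\mid c\ll a\}$ and the Scott-openness of $\{y\mid c\ll y\}$, are exactly the interpolation and approximation properties that let the pair $(\sigma,\omega)$ separate a point from those it fails to dominate; for a merely complete lattice these may fail, and they encode continuity of the lattice. I would therefore organise the write-up so that this approximation property is isolated as the essential input on $(X,\precsim)$ (available in the paper's intended setting, where the relevant lattices are Dedekind--MacNeille completions of precontinuous preorders and hence continuous), with the compactness argument and the verification above supplying the routine remainder.
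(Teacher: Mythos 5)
Your proposal is correct, and its first half is, in substance, the paper's \emph{entire} proof: the paper proves Proposition \ref{vjh} only by the covering argument for $\sigma\vee\omega$ (take a cover by sets $U_{_i}\cap(X\setminus\uparrow x_{_i})$, form $x=\sup_{i}x_{_i}$, use Scott-openness of a member containing $x$ to reach a finite join, and conclude). Your Alexander-subbasis formulation is the same idea executed more carefully, and it actually repairs a slip in the paper's write-up: since $x=\sup_{i}x_{_i}$ lies in every $\uparrow x_{_i}$, it cannot belong to any covering set of the displayed form $U_{_i}\cap(X\setminus\uparrow x_{_i})$; one must, as you do, split the subbasic cover into its $\sigma$-members and its $\omega$-members and only then locate $x$ inside a Scott-open member.

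Where you genuinely diverge is your second half. The paper's proof says nothing about the $\sigma\times\omega$-closedness of $\precsim$, even though, by the paper's own definition of a bitopological preordered space, that is part of what the proposition asserts. Your diagnosis of this point is exactly right, and it is not a removable defect: by Theorem \ref{a3}($\mathfrak{a}$), closedness of $\precsim$ in $\sigma\times\omega$ would force $\sigma\vee\omega$ (the Lawson topology) to be Hausdorff, and it is classical that the Lawson topology of a merely complete lattice can fail to be Hausdorff (Hausdorffness singles out the quasicontinuous lattices). So the ``preordered space'' half of the statement is not provable from completeness alone, and your identification of the approximation property $a=\bigvee\{c\mid c\ll a\}$ together with Scott-openness of $\{y\mid c\ll y\}$ as the essential extra input is precisely how the paper itself proceeds downstream: the closedness of $\precsim$ is established only inside the proof of Theorem \ref{kjh}, under the hypothesis that $(X,\precsim)$ is a \emph{continuous} lattice, by exactly the separation you describe (choose $z\ll x$ with $z\not\precsim y$ and separate with $\{w\mid z\ll w\}$ and $X\setminus\uparrow z$). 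In short: your proof buys a correct and complete treatment, at the price of an added continuity hypothesis for the order-closedness half, while the paper's proof establishes compactness only and leaves its stronger stated conclusion unjustified; the honest fix is either to assume continuity in Proposition \ref{vjh} or to weaken its conclusion to ``joincompact bitopological space.''
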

\begin{proof} By \cite[Theorems III-1.9 and III-1.10]{GH}, $\sigma(\mathcal{L})\vee
\omega(\mathcal{L})=\lambda(\mathcal{L})$ is a compact topology.
To show that $\precsim$ is closed,
suppose that $x\not\precsim y$ for some $x, y\in X$. 
By the remark \ref{opa2} (see also \cite[Definition I-1.6]{GH}), there exists $z\in X$
such that $z\ll x$ and $z\not\precsim y$. Then, $X\setminus \uparrow{z}$ is a decreasing
$\omega$-open neighborhood of $y$ and 
$\turnnw{\twoheadleftarrow}{z}$
is an increasing Scott-open neighborhood of $x$ such that 
$\turnnw{\twoheadleftarrow}{z}\cap (X\setminus \uparrow{z})=\emptyset$.
It follows that $\precsim$ is $\sigma\times\omega$-closed subset of $X\times X$ and 
so $(X,\sigma,\omega,\precsim)$ is a bitopological ordered space. 
\end{proof}

\begin{remark}\label{opa1}{\rm
In $[0,1]$, $x\ll y$ if and only if $x<y$ or $x=y=0$. Thus this ordered set is a continuous lattice (see \cite[Example 3.2]{HKMS}). 
The {\it lower topology} on $[0,1]$ is $\mathfrak{L}=\{[0,a)\vert 0<a\precsim 1\}\cup \{\emptyset, [0,1]\}$ and the {\it upper topology} is 
$\mathfrak{U}=\{(a,1]\vert 0\precsim a<1\}\cup \{\emptyset, [0,1]\}$.
A function $f$ from a topological space $X$ into the real unit interval $[0,1]$ is Scott-continuous if and only if it is lower semicontinuous in the sense of classical analysis (see \cite[Example 1-5.9]{GW}). On the other hand, 
if $X$ is a topological space, then 
$f: X\to [0,1]$ is 
upper semicontinuous if and only if it is a continuous map of 
$f: X\to [0,1]$
with the lower topology (see \cite[Page 295]{HKMS}).
Hence, if
$(X,\sigma,\omega)=(X,\sigma(\mathcal{L}),\omega(\mathcal{L}))$ is the bitopological space generated by a continuous lattice
$\mathcal{L}=(X,\precsim)$, 
without loss of generality, by Proposition \ref{a2},
we may assume that
the notion of $\sigma$-lower  semicontinuous function
coincides with the notion of Scott-continuous function and the notion of
$\omega$-upper semicontinuous function
coincides with the notion of $\omega$-continuous function.
The following notation will be used in the sequel: 
$\mathfrak{I}=([0,1], \sigma, \omega)$.
}
\end{remark}

\begin{theorem}\label{kjh}{\rm Let $\mathcal{L}=(X,\precsim)$ be a continuous lattice. 
Then,  the set of all Scott and $\omega$-continuous functions is a 
Richter-Peleg 
multi-utility representation of $\precsim$.}
\end{theorem}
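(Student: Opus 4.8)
The plan is to reduce the statement to the characterization already proved in Theorem \ref{a120}. First I would fix the ambient structure. A continuous lattice is in particular a complete lattice, so Proposition \ref{vjh} guarantees that $\widetilde{\mathfrak{X}}=(X,\sigma,\omega)$ is joincompact, and hence (Proposition \ref{a2}) a joincompact bitopological ordered space $\mathfrak{X}=(X,\sigma,\omega,\precsim)$. Next I would invoke Remark \ref{opa1} to rewrite the two continuity hypotheses as semicontinuity conditions: a function is Scott-continuous exactly when it is $\sigma$-lower semicontinuous, and $\omega$-continuous exactly when it is $\omega$-upper semicontinuous. Since Scott-continuity forces monotonicity and joincompactness forces boundedness (Proposition \ref{a2}), the family $\mathcal{V}$ of all Scott- and $\omega$-continuous functions on $X$ is \emph{precisely} the class $\mathcal{I}\mathcal{B}\mathcal{L}\mathcal{U}(\mathfrak{X})$ of increasing members of $\mathcal{B}\mathcal{L}\mathcal{U}(\mathfrak{X})$. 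This identification is exactly the point at which the continuity of the lattice (rather than mere completeness) is used.

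With $\mathcal{V}=\mathcal{I}\mathcal{B}\mathcal{L}\mathcal{U}(\mathfrak{X})$ in hand, I would verify the three hypotheses of Theorem \ref{a120}. Closedness under the uniform norm is immediate from Proposition \ref{lase}, together with the fact isolated in Lemma \ref{a098} that uniform limits preserve both boundedness and monotonicity. For the separating property, given distinct $x,y\in X$ antisymmetry of the lattice order yields $x\not\precsim y$ or $y\not\precsim x$, and Proposition \ref{a112} then produces an increasing, $\sigma$-lower and $\omega$-upper semicontinuous $f$ with $f(x)\neq f(y)$; by the identification above $f\in\mathcal{V}$, so $\mathcal{V}$ separates points. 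Finally, to see that $\mathcal{V}$ is a generalized cone lattice I would check that $f\vee g$, $f\wedge g$ and the nonnegative affine combinations $\lambda f+\kappa$ (with $\lambda\geq 0$) remain in $\mathcal{V}$: monotonicity and $\omega$-upper semicontinuity are routine, while Scott-continuity requires checking that these operations commute with directed suprema.

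Once the three properties are established, Theorem \ref{a120} applies and shows that $\mathcal{V}$ makes $\widetilde{\mathfrak{X}}$ a bitopological ordered space for which $\mathcal{V}$ is a Richter--Peleg multi-utility representation of the induced order $\precsim'$ given by $x\precsim' y$ iff $f(x)\le f(y)$ for all $f\in\mathcal{V}$. The remaining task is to confirm $\precsim'=\precsim$: the forward inclusion is just monotonicity of each $f\in\mathcal{V}$, and the reverse inclusion is precisely the separating property, since $x\not\precsim y$ produces, via Proposition \ref{a112}, an $f\in\mathcal{V}$ with $f(x)>f(y)$ and hence $x\not\precsim' y$. Therefore $\mathcal{V}$ is a Richter--Peleg multi-utility representation of the lattice order $\precsim$.

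I expect the main obstacle to be the generalized-cone-lattice verification, and specifically showing that the pointwise operations $\vee$ and $\wedge$ preserve Scott-continuity (preservation of directed suprema) \emph{simultaneously} with $\omega$-upper semicontinuity; the monotone, directed-set bookkeeping needed to confirm that $\sup$ over a directed set commutes with a pointwise minimum is the delicate computation here. The identification of $\mathcal{V}$ with $\mathcal{I}\mathcal{B}\mathcal{L}\mathcal{U}(\mathfrak{X})$ through Remark \ref{opa1} is the conceptual heart, as it is what lets the purely order-theoretic class of Scott- and $\omega$-continuous maps be fed into the topological machinery of Theorem \ref{a120}.
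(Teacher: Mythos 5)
Your overall architecture (verify that the family is a separating, uniformly closed generalized cone lattice, invoke Theorem \ref{a120}, then identify the induced order $\precsim'$ with $\precsim$) matches the paper's, but there is a genuine gap at the very first step, and it sits exactly where the continuity of the lattice must do its work. You obtain the bitopological ordered space structure on $(X,\sigma,\omega,\precsim)$ by citing Proposition \ref{vjh} (completeness $\Rightarrow$ joincompactness) together with Proposition \ref{a2}. But Proposition \ref{a2} only upgrades a bitopological \emph{preordered} space to an ordered one; to apply it you must already know that the graph of $\precsim$ is $\sigma\times\omega$-closed, and nothing in your argument establishes this. The proof of Proposition \ref{vjh} establishes only the finite-subcover property of $\sigma\vee\omega$ and says nothing about closedness of the order, and closedness cannot follow from completeness alone: by Theorem \ref{a3}($\mathfrak{a}$) and antisymmetry, a $\sigma\times\omega$-closed order forces $\sigma\vee\omega$ (the Lawson topology) to be Hausdorff, which fails for complete lattices that are not continuous. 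Indeed, Theorem \ref{gjh} shows the desired representation exists precisely when the order is precontinuous, and for a complete lattice (which is its own MacNeille completion) this is equivalent to continuity; so any proof that uses only completeness must break somewhere, and yours breaks here. The gap propagates: your separating property and your final identification $\precsim'=\precsim$ both invoke Proposition \ref{a112}, which is stated for joincompact bitopological \emph{preordered} spaces, i.e., it presupposes the closedness you have not proved.

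The paper fills this hole with the one step of its proof that genuinely uses continuity rather than completeness: by the approximation axiom (Remark \ref{opa2}), $x\not\precsim y$ yields $z\ll x$ with $z\not\precsim y$, and then $\{w\in X\mid z\ll w\}$ is an increasing Scott-open neighborhood of $x$ while $X\setminus\uparrow z$ is a decreasing $\omega$-open neighborhood of $y$, and these sets are disjoint; hence $\precsim$ is $\sigma\times\omega$-closed and the space is pairwise Hausdorff. Your proposal locates the use of continuity instead in the Remark \ref{opa1} identification of Scott-continuity with $\sigma$-lower semicontinuity, but that identification holds for functions on any dcpo and carries no real weight; the closedness of the order is the conceptual heart of the theorem, and it is the piece missing from your argument.
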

\begin{proof} Suppose that $\mathcal{L}=(X,\precsim)$ is a continuous lattice. 
By Proposition \ref{vjh},  $\mathfrak{X}=(X,\sigma(\mathcal{L}),\omega(\mathcal{L}),\precsim)$
is a joincompact bitopological ordered space.
Let $\mathcal{V}$ be the set of all 
Scott and $\omega$-continuous
Richter-Peleg utility representation of $\precsim$.
By Proposition \ref{a111} we have $\mathcal{V}\neq \emptyset$.
Therefore,  $\mathcal{V}$ is a Scott and $\omega$-continuous
Richter-Peleg 
multi-utility representation of $\precsim$.
\end{proof}

A function $p: X\to Y$ is a {\it quotient map} if satisfy the following conditions: 
(i) $p$ is surjective; (ii) $p$ is continuous (i.e. $U$ is open in $Y$ implies that
$p^{-1}(U)$ is open in $X$), and (iii) $V\subseteq Y$ and $p^{-1}(V)$ open in $X$ implies that $V$ open in $Y$.
In this case we say the map $f$ is a {\it quotient map}.
In fact, if $f$ is a quotient map, then,
\begin{center}
$U$ is open in $Y$ if and only if $p^{-1}(U)$ is open in $X$.
\end{center}
The topology produced by $p$ is known as {\it quotient topology}.
If $\approx$ is an equivalence relation on a topological space $(X,\tau)$,
then the quotient set by this equivalence relation $\approx$ will be denoted by 
$X_{_\approx}$, and its elements (equivalence classes) by $[x]$. Let the {\it projection map }
$\pi: X\to X_{_\approx}$
which carries each point of $X$ to the element of $X_{_\approx}$
that contains it. The projection map is a quotient map.
The quotient topology defined by $\pi$ is denoted by $\tau_{_\approx}$ and
the topological space $(X_{_\approx},\tau_{_\approx})$ is called the {\it quotient space}
of $X$ determined by $\approx$.
Thus, the typical open set in
$X_{_\approx}$
is a collection of equivalence classes whose union is an open set in $X$.
A continuous map $f: X\to Y$ {\it respects the equivalence relation} $R$ if equivalent points have identical
images, that is if $xRy$ implies $f(x)=f(y)$.

Let $\precsim$ be a preorder on a set $X$.
Define an equivalence class $\approx$ on $X$ by 
$x \approx y$ if and only if $x\precsim y$ and $y\precsim x$. Then, $[x]=\{y\in X\vert $
$x\precsim y$ and $y\precsim x$ $\}$.
We can define a partial order $\sqsubseteq$ on $X_{_\approx}$ by:
\begin{center}
$[x] \sqsubseteq [y]$ if and only if $[x]=[y]$ or there are $x^{\prime}\in [x], y^{\prime}\in [y]$ such that $x^{\prime}\precsim y^{\prime}$.
\end{center}
Clearly,
\begin{center}
$x\precsim y$ if and only if $[x]\sqsubseteq [y]$.
\end{center}

In the following, the symbols $X_{_\approx}$ and $\sqsubseteq$,
will always stand for the notions
that have
been defined just above.

\begin{remark}\label{olymb}{\rm Let $\mathcal{P}=(X,\precsim)$ be a preordered set and 
$\mathcal{Q}=(X_{_\approx},\sqsubseteq)$ be the poset, as it has been defined just
before. Let $Fid(X_{_\approx},\sqsubseteq)$ denote the set of all Frink ideals in 
$\mathcal{Q}$.
If $x, y\in X$, we write $[x] \ll^{\ast}_{_e} [y]$, if 
for each $[I]\in Fid(X_{_\approx},\sqsubseteq)$, $[y]\in [I]^{\delta}$ implies $[x]\in [I]$. 
}
\end{remark}

The following three propositions are useful in the proof of main theorem.

\begin{proposition}\label{tame}{\rm Let $\mathfrak{X}=(X,\tau_1,\tau_2,\precsim)$
be a topological preordered space and let
$\mathfrak{X}_{_\approx}=(X_{_\approx},\mathfrak{t}_{_1},\mathfrak{t}_{_2},\sqsubseteq)$ be the quotient space of $\mathfrak{X}$.
Then, $\mathfrak{X}$ is a topological preordered space if and only if 
$\mathfrak{X}$ is a topological ordered space.
}
\end{proposition}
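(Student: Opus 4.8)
The plan is to phrase everything in terms of the graphs $G_{_\precsim}=\{(x,y)\in X\times X\mid x\precsim y\}$ and $G_{_{\precsim^{\ast}}}=\{([x],[y])\mid [x]\precsim^{\ast}[y]\}$ and to exploit the identity $x\precsim y\Leftrightarrow [x]\precsim^{\ast}[y]$ recorded just above the statement. Writing $\pi\colon X\to X\!/\!_{_\approx}$ for the canonical projection, that identity says precisely that $(\pi\times\pi)^{-1}(G_{_{\precsim^{\ast}}})=G_{_\precsim}$. Since $\precsim^{\ast}$ is antisymmetric by its very construction, the only substantive claim is the equivalence \emph{$G_{_\precsim}$ is $\tau_{_1}\times\tau_{_2}$-closed} $\Leftrightarrow$ \emph{$G_{_{\precsim^{\ast}}}$ is $(\tau_{_1})_{_{X\!/\!_{_\approx}}}\times(\tau_{_2})_{_{X\!/\!_{_\approx}}}$-closed}; once this is secured, antisymmetry together with the separation supplied by Theorem \ref{a3}($\mathfrak{a}$) makes the join quotient topology Hausdorff, so that $\widetilde{\mathfrak{X}}$ is genuinely a bitopological \emph{ordered} space.

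For the direction that assumes $\widetilde{\mathfrak{X}}$ is ordered I would simply invoke continuity. By the definition of the quotient topologies, $\pi$ is continuous both as a map $(X,\tau_{_1})\to(X\!/\!_{_\approx},(\tau_{_1})_{_{X\!/\!_{_\approx}}})$ and as a map $(X,\tau_{_2})\to(X\!/\!_{_\approx},(\tau_{_2})_{_{X\!/\!_{_\approx}}})$, whence $\pi\times\pi$ is continuous for the relevant product topologies. As $G_{_{\precsim^{\ast}}}$ is closed and $(\pi\times\pi)^{-1}(G_{_{\precsim^{\ast}}})=G_{_\precsim}$, the set $G_{_\precsim}$ is closed, i.e. $\mathfrak{X}$ is a bitopological preordered space. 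This half is routine.

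The converse is the real work. Assuming $G_{_\precsim}$ closed, I would fix $([a],[b])\notin G_{_{\precsim^{\ast}}}$, equivalently $a\not\precsim b$, and apply Theorem \ref{a3}($\mathfrak{a}$) to obtain an increasing $\tau_{_1}$-neighbourhood $\widetilde{V}_a=\uparrow V_a$ of $a$ and a decreasing $\tau_{_2}$-neighbourhood $\widetilde{V}_b=\downarrow V_b$ of $b$ with $\widetilde{V}_a\cap\widetilde{V}_b=\emptyset$. The decisive structural observation is that every increasing set and every decreasing set is $\approx$-saturated: if $x$ lies in an increasing set and $x\approx x'$, then $x\precsim x'$ forces $x'$ into the set, and symmetrically. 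Hence $\widetilde{V}_a\supseteq [a]$, $\widetilde{V}_b\supseteq [b]$, and $\pi^{-1}\pi(\widetilde{V}_a)=\widetilde{V}_a$, $\pi^{-1}\pi(\widetilde{V}_b)=\widetilde{V}_b$. Disjointness moreover upgrades to $(\widetilde{V}_a\times\widetilde{V}_b)\cap G_{_\precsim}=\emptyset$, for if $x\in\widetilde{V}_a$, $y\in\widetilde{V}_b$ and $x\precsim y$, then increasingness gives $y\in\widetilde{V}_a$, contradicting $\widetilde{V}_a\cap\widetilde{V}_b=\emptyset$. Projecting, $\pi(\widetilde{V}_a)$ and $\pi(\widetilde{V}_b)$ are an increasing set around $[a]$ and a decreasing set around $[b]$ whose product avoids $G_{_{\precsim^{\ast}}}$; feeding these back into the converse half of Theorem \ref{a3}($\mathfrak{a}$), now read in $\widetilde{\mathfrak{X}}$, yields that $G_{_{\precsim^{\ast}}}$ is closed.

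The step I expect to be the genuine obstacle is the transfer of \emph{openness} through $\pi$: to certify that $\pi(\widetilde{V}_a)$ is an actual $(\tau_{_1})_{_{X\!/\!_{_\approx}}}$-neighbourhood of $[a]$ I need a saturated $\tau_{_1}$-open set wedged between the class $[a]$ and $\widetilde{V}_a$, yet saturations of open sets need not be open (equivalently, $\pi\times\pi$ need not be a quotient map, since products of quotient maps may fail to be quotient). The way I would attack this is to arrange the separators of Theorem \ref{a3}($\mathfrak{a}$) to be \emph{open} increasing/decreasing sets directly: by Theorem \ref{a3}($\mathfrak{b}$) the decreasing set $d(b)=\downarrow b$ is $\tau_{_1}$-closed and the increasing set $i(a)=\uparrow a$ is $\tau_{_2}$-closed, so $X\setminus\downarrow b$ and $X\setminus\uparrow a$ are saturated open up- and down-sets, and I would intersect the neighbourhoods $\widetilde{V}_a,\widetilde{V}_b$ with these to produce saturated open separators that descend cleanly under $\pi$. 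Checking that this refinement still disconnects $G_{_\precsim}$ — rather than merely separating the pair $(a,b)$ — is the single point where real care is required, and it is where I would concentrate the argument.
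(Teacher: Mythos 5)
Your first half is correct and complete: $\pi\times\pi$ is continuous for the product of the quotient topologies, $(\pi\times\pi)^{-1}(G_{_{\precsim^{\ast}}})=G_{_\precsim}$, so closedness of $\precsim^{\ast}$ descends to closedness of $\precsim$; and your remark that antisymmetry of $\precsim^{\ast}$ plus closedness yields Hausdorffness of the join is also right. The genuine gap is in the converse, and it sits exactly where you say you would ``concentrate the argument'' --- that is, the proposal stops where the proof has to start. To read Theorem \ref{a3}($\mathfrak{a}$) inside $\widetilde{\mathfrak{X}}$ you must exhibit a $(\tau_{_1})_{_{X\!/\!_{_\approx}}}$-neighbourhood of $[a]$ and a $(\tau_{_2})_{_{X\!/\!_{_\approx}}}$-neighbourhood of $[b]$ whose product misses $G_{_{\precsim^{\ast}}}$; equivalently, \emph{saturated} $\tau_{_1}$-open and $\tau_{_2}$-open sets around $a$ and $b$ whose product misses $G_{_\precsim}$. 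Your repair cannot deliver these. The sets $X\setminus d(b)$ and $X\setminus i(a)$ are indeed saturated, open and monotone, but they are not disjoint and their product does not avoid $G_{_\precsim}$: any $z$ incomparable to both $a$ and $b$ lies in both sets, and any pair $x\precsim y$ with $x\not\precsim b$ and $a\not\precsim y$ lies in their product. Intersecting with $\widetilde{V}_a$ and $\widetilde{V}_b$ restores disjointness but destroys the one property you need: the only open set you control inside $\widetilde{V}_a\cap(X\setminus d(b))$ is $V_a\cap(X\setminus d(b))$, which is no longer saturated, so you still cannot certify that $\pi\bigl(\widetilde{V}_a\cap(X\setminus d(b))\bigr)$ is a quotient-neighbourhood of $[a]$. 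In short, the obstruction you correctly identified at the outset (saturations of open sets need not be open; $\pi\times\pi$ need not be a quotient map) defeats your own fix, and the converse implication is never established.

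This is not a removable technicality at the stated level of generality: when $\precsim$ is symmetric, the converse specializes to the assertion that a closed equivalence relation always has a Hausdorff quotient with respect to the product of the quotient topologies, which is precisely the classically delicate point and is why quotient constructions for (pre)ordered spaces are usually carried out under compactness- or normality-type hypotheses (cf.\ the quotient discussion in Minguzzi \cite{min}). For what it is worth, the paper's own proof --- an appeal to Theorem \ref{a3}($\mathfrak{a}$) plus ``the rest is obvious'' --- is the same route as yours and does not confront this point either. The way to close the gap with the tools already in this paper is to add joincompactness: then Theorem \ref{a7} makes the space pairwise normally ordered, and the Urysohn-type separation theorem of Section 3 applied to the disjoint sets $d(b)$ (decreasing, $\tau_{_1}$-closed by Theorem \ref{a3}($\mathfrak{b}$)) and $i(a)$ (increasing, $\tau_{_2}$-closed) yields an increasing, $\tau_{_1}$-lower and $\tau_{_2}$-upper semicontinuous $f$ with $f(d(b))=0$ and $f(i(a))=1$; the sets $f^{-1}\bigl((\tfrac12,\infty)\bigr)\ni a$ and $f^{-1}\bigl((-\infty,\tfrac12)\bigr)\ni b$ are then monotone (hence saturated), open, disjoint, and their product misses $G_{_\precsim}$, so their images under $\pi$ witness closedness of $G_{_{\precsim^{\ast}}}$. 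Either add such a hypothesis and run this argument, or produce saturated open separators by some other means; as written, your proposal proves only the easy half of the equivalence.
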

\begin{proof} By Theorem \ref{a3}($\mathfrak{a}$) and by definition of the
quotient topology, it is easy to show that:\
$\precsim$ is $\tau_1\times \tau_2$-closed if and only if
$\sqsubseteq$ is
$\mathfrak{t}_{_1}\times \mathfrak{t}_{_2}$-closed. The rest is obvious.
\end{proof}

\begin{proposition}\label{tfte}{\rm Let $\precsim$ be a preorder on a set $X$.
Then, $(X,\precsim)$ is precontinuous if and only if $(X_{_\approx},\sqsubseteq)$ is precontinuous.
}
\end{proposition}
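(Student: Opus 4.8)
The plan is to push every order-theoretic construction through the projection $\pi\colon X\to X\!/\!_{_\approx}$ and to observe that precontinuity, being defined purely in terms of upper bounds, lower bounds, the operator $A\mapsto A^{\delta}$, Frink ideals and the relation $\ll_{_e}$, is transported faithfully in both directions. Throughout I will use the fact recorded just before the statement that $x\precsim y$ if and only if $[x]\precsim^{\ast}[y]$, together with the observation that any subset of $X$ built from the order alone is automatically \emph{saturated}, i.e. a union of $\approx$-classes. Writing $[A]=\pi(A)$, saturation is exactly what makes $\pi$ and $\pi^{-1}$ mutually inverse on the families of sets that occur, so that membership statements can be tested upstairs in $X$ or downstairs in $\mathcal{Q}=(X\!/\!_{_\approx},\precsim^{\ast})$ interchangeably.

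First I would record the behaviour of the bound operators. For any $A\subseteq X$ one checks directly from $x\precsim y\Leftrightarrow[x]\precsim^{\ast}[y]$ that $[A^{\uparrow}]=[A]^{\uparrow}$ and $[A^{\downarrow}]=[A]^{\downarrow}$, the operators on the right taken in $\mathcal{Q}$, and that $A^{\uparrow}$, $A^{\downarrow}$, hence $A^{\delta}=(A^{\uparrow})^{\downarrow}$, are saturated. Composing the two identities yields the key commutation law $[A^{\delta}]=[A]^{\delta}$, and, because $A^{\delta}$ is saturated, also the membership equivalence $x\in A^{\delta}\Leftrightarrow[x]\in[A]^{\delta}$.

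Next I would transport Frink ideals. Taking the singleton $Z=\{x\}$ shows $\{x\}^{\delta}=\downarrow x$ in any qoset, so every Frink ideal is a lower set and therefore saturated. Using the commutation law for $\delta$ one verifies that $I\mapsto[I]$ sends Frink ideals of $\mathcal{P}$ to Frink ideals of $\mathcal{Q}$, and that $\mathcal{I}\mapsto\pi^{-1}(\mathcal{I})$ is its two-sided inverse, giving a bijection between $Fid(X,\precsim)$ and $Fid(X\!/\!_{_\approx},\precsim^{\ast})$ compatible with the $\delta$-closure. From this correspondence, together with $[I^{\delta}]=[I]^{\delta}$, I would deduce $x\ll_{_e}y\Leftrightarrow[x]\ll^{\ast}_{_e}[y]$ by unwinding the two definitions: to pass one way set $\mathcal{I}=[I]$, to pass the other set $I=\pi^{-1}(\mathcal{I})$, and use saturation to move the membership $y\in I^{\delta}$ to $[y]\in\mathcal{I}^{\delta}$ and back.

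Finally, since $x\ll_{_e}y\Leftrightarrow[x]\ll^{\ast}_{_e}[y]$, the set $\{y\mid y\ll_{_e}x\}$ is saturated and its $\pi$-image is exactly $\{[y]\mid[y]\ll^{\ast}_{_e}[x]\}$. Applying the commutation law for $\delta$ once more gives $x\in\{y\mid y\ll_{_e}x\}^{\delta}\Leftrightarrow[x]\in\{[y]\mid[y]\ll^{\ast}_{_e}[x]\}^{\delta}$, and as $\pi$ is surjective, quantifying over all $x\in X$ is the same as quantifying over all $[x]\in X\!/\!_{_\approx}$. Hence the defining condition of precontinuity holds for $\precsim$ if and only if it holds for $\precsim^{\ast}$. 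I expect the only real obstacle to be bookkeeping: one must check at each stage that the set entering the definition is saturated, since it is precisely saturation that makes the correspondence $I\leftrightarrow[I]$ a bijection and lets every membership test be carried out on either side of $\pi$.
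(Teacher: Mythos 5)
Your proposal is correct and follows essentially the same route as the paper: the paper's proof is a one-sentence appeal to the equivalence $x\precsim y\Leftrightarrow[x]\precsim^{\ast}[y]$ together with Remark \ref{olymb}, i.e.\ precisely the transport-through-the-quotient argument you carry out. Your write-up simply supplies the details that sentence leaves implicit --- saturation of order-defined sets, the commutation $[A^{\delta}]=[A]^{\delta}$, the bijection of Frink ideals, and the invariance of $\ll_{_e}$ --- all of which check out.
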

\begin{proof}
It is an immediate consequence of
Definition \ref{pas} and Remark \ref{olymb}.
\end{proof}

Scott's continuous functions are monotonic, so $x\approx y$ implies $f(x)=f(y)$ 
for a Scott continuous function $f$, that is, $f$ respects $\approx$.

The following proposition is an immediate consequence of \cite[Theorem 2.82]{mol}

\begin{proposition}\label{a1me}{\rm Let $\mathcal{P}=(X,\precsim)$ be a preordered set and let
$p$ be the quotient map of $X$ to $X_{_\approx}$.
Let $f: X\to [0,1]$
be a Scott and $\omega$-continuous Richter-Peleg representation of $\precsim$.
Then, $\sqsubseteq$ induces a 
Scott and $\omega$-continuous Richter-Peleg representation 
$\widetilde{f}: X_{_\approx}\to [0,1]$
such that $\widetilde{f}\circ \pi =f$.
Conversely, if $\widetilde{f}: X_{_\approx}\to [0,1]$ is 
a Scott and $\omega$-continuous Peleg-Richter representation 
of $\sqsubseteq$, then $\precsim $ induces a
Scott and $\omega$-continuous Peleg-Richter representation 
$f: X\to [0,1]$
such that $\widetilde{f}\circ \pi=f$.
}
\end{proposition}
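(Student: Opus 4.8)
The plan is to realize the correspondence $f\leftrightarrow\widetilde f$ as the factorization of $f$ through the quotient map $p$, and then to verify that each of the three relevant features---being a Richter-Peleg representation, Scott-continuity, and $\omega$-continuity---passes across $p$ in both directions. The engine throughout is the identity $x\precsim y$ if and only if $[x]\precsim^{\ast}[y]$ recorded just before Remark \ref{olymb}, together with the topological fact, to be isolated below, that $p$ is a quotient map for both the Scott and the lower topologies.

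First I would handle the forward direction. The starting observation is that any Richter-Peleg representation is isotone, so if $x\approx y$, i.e. $x\precsim y$ and $y\precsim x$, then $f(x)\precsim f(y)$ and $f(y)\precsim f(x)$, whence $f(x)=f(y)$. Thus $f$ is constant on each $\approx$-class and factors uniquely as $f=\widetilde f\circ p$ with $\widetilde f([x]):=f(x)$ well defined; uniqueness is forced by surjectivity of $p$. That $\widetilde f$ is a Richter-Peleg representation of $\precsim^{\ast}$ is then immediate: isotonicity of $\widetilde f$ follows from isotonicity of $f$ via $x\precsim y\iff[x]\precsim^{\ast}[y]$, and since $[x]\prec^{\ast}[y]$ (that is, $[x]\precsim^{\ast}[y]$ and $[y]\not\precsim^{\ast}[x]$) corresponds exactly to $x\prec y$, the strict inequality $f(x)<f(y)$ gives $\widetilde f([x])<\widetilde f([y])$.

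The heart of the argument, and the step I expect to be the main obstacle, is the topological one: that $f$ is Scott- (resp. $\omega$-) continuous if and only if $\widetilde f$ is. The fact I would isolate is that every Scott-open set of $X$ is an upper set and every subbasic $\omega$-open set $X\setminus\uparrow x$ is a lower set, and that upper and lower sets are automatically \emph{saturated} with respect to $\approx$ (if $x$ lies in an upper set $A$ and $x\approx y$, then $x\precsim y$ forces $y\in A$, and dually for lower sets). Consequently $p^{-1}$ is a bijection between the Scott (resp. lower) topology of $(X/_{\approx},\precsim^{\ast})$ and that of $(X,\precsim)$; equivalently, $p$ is a topological quotient map for each of the two topologies. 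Hence for every $a$ the set $\{x:f(x)>a\}=p^{-1}(\{[x]:\widetilde f([x])>a\})$ is open upstairs exactly when its image is open downstairs. Invoking Remark \ref{opa1}, which identifies Scott-continuity into $[0,1]$ with $\sigma$-lower semicontinuity and $\omega$-continuity with $\omega$-upper semicontinuity, this saturation/quotient argument yields the equivalence of continuity in both directions. The only delicate point is checking that directed suprema in $X$ descend to, and lift from, directed suprema in $X/_{\approx}$; but this is precisely what quotienting by $\approx$ achieves, since in a preorder suprema are unique only up to $\approx$.

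For the converse direction I would reverse the factorization: given a Scott and $\omega$-continuous Richter-Peleg representation $\widetilde f$ of $\precsim^{\ast}$, set $f:=\widetilde f\circ p$. Continuity of $f$ for both topologies is then free, since $p$ is continuous for each (quotient maps are continuous) and a composite of continuous maps is continuous; and $f$ is a Richter-Peleg representation of $\precsim$ by reading the equivalences $x\precsim y\iff[x]\precsim^{\ast}[y]$ and $x\prec y\iff[x]\prec^{\ast}[y]$ in the opposite direction. Uniqueness of the induced $f$ is automatic from the relation $f=\widetilde f\circ p$, which completes the proof once the saturation/quotient identification of the topologies has been established.
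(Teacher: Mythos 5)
Your proof is correct and follows essentially the same route as the paper's: factor $f$ through $p$ using constancy on $\approx$-classes (forced by the Richter--Peleg property), transfer the order-theoretic conditions via the equivalence $x\precsim y$ iff $[x]\precsim^{\ast}[y]$, and transfer Scott- and $\omega$-continuity via the quotient-map property of $p$. If anything, your saturation argument --- that Scott-open and subbasic $\omega$-open sets are upper/lower sets, hence $\approx$-saturated, so that the Scott and lower topologies of $(X/_{\approx},\precsim^{\ast})$ coincide with the corresponding quotient topologies --- supplies a justification for the step the paper only asserts, namely that $p$ is a quotient map for these two topologies.
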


The following corollary is an immediate consequence of Theorem \ref{kjh},
Remark \ref{olymb} and Propositions \ref{tame}, \ref{tfte} and \ref{a1me}.

\begin{corollary}\label{lant}{\rm Let $\mathfrak{X}=(X,\tau_1,\tau_2,\precsim)$
be a topological preordered space and let
$\mathfrak{X}_{_\approx}=(X_{_\approx},\mathfrak{t}_{_1},\mathfrak{t}_{_2},\sqsubseteq)$ be the quotient space of $\mathfrak{X}$.
Then, the set of 
Scott and $\omega$-continuous Peleg-Richter representation of $\precsim$
in
$\mathfrak{X}$ is isomorphic to the
set of 
Scott and $\omega$-continuous Peleg-Richter representation of $\sqsubseteq$
in
$\mathfrak{X}_{_\approx}$.
}
\end{corollary}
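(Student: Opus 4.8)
The plan is to read the isomorphism off directly from the correspondence established in Proposition \ref{a1me}, after first checking that the two sides of the claimed isomorphism are genuinely the objects that Proposition \ref{a1me} compares. Write $\mathcal{V}$ for the set of Scott and $\omega$-continuous Richter-Peleg representations $f:X\to[0,1]$ of $\precsim$ on $\mathfrak{X}$, and $\widetilde{\mathcal{V}}$ for the set of such representations $\widetilde{f}:X\!/\!_{_\approx}\to[0,1]$ of $\precsim^{\ast}$ on $\widetilde{\mathfrak{X}}$. By Proposition \ref{tame} the passage from $\mathfrak{X}$ to $\widetilde{\mathfrak{X}}$ converts the preordered space into an ordered space, so $\widetilde{\mathcal{V}}$ is defined over a legitimate bitopological ordered space; and by Proposition \ref{tfte} together with Remark \ref{olymb}, precontinuity of $\precsim$ is equivalent to precontinuity of $\precsim^{\ast}$, so that Theorem \ref{kjh} applies to $\mathfrak{X}$ exactly when it applies to $\widetilde{\mathfrak{X}}$ and both families $\mathcal{V}$, $\widetilde{\mathcal{V}}$ are simultaneously populated. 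This fixes the setting in which the comparison is meaningful.

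Next I would define the map $\Phi:\mathcal{V}\to\widetilde{\mathcal{V}}$ by $\Phi(f)=\widetilde{f}$, where $\widetilde{f}$ is the unique factorization $f=\widetilde{f}\circ p$ supplied by the first half of Proposition \ref{a1me}, $p$ being the quotient map. The candidate inverse is $\Psi:\widetilde{\mathcal{V}}\to\mathcal{V}$, $\Psi(\widetilde{f})=\widetilde{f}\circ p$, which lands in $\mathcal{V}$ by the converse half of Proposition \ref{a1me}. That $\Phi$ and $\Psi$ are mutually inverse is immediate: $\Psi(\Phi(f))=\widetilde{f}\circ p=f$ by construction, and $\Phi(\Psi(\widetilde{f}))=\widetilde{f}$ by the uniqueness clause of Proposition \ref{a1me}. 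Hence $\Phi$ is a bijection between $\mathcal{V}$ and $\widetilde{\mathcal{V}}$.

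It remains to check that $\Phi$ is structure preserving, i.e. an isomorphism in the sense of the generalized cone lattices produced by Theorem \ref{a120}. Here the decisive fact is that $p$ is surjective and all the relevant operations are pointwise. Concretely, for $f_{_1},f_{_2}\in\mathcal{V}$ one has $f_{_1}\vee f_{_2}=(\widetilde{f_{_1}}\vee\widetilde{f_{_2}})\circ p$ and $f_{_1}\wedge f_{_2}=(\widetilde{f_{_1}}\wedge\widetilde{f_{_2}})\circ p$, and $\alpha f+\beta=(\alpha\widetilde{f}+\beta)\circ p$ for scalars $\alpha\geq 0$, $\beta\in\mathbb{R}$; applying the uniqueness clause of Proposition \ref{a1me} to each of these identities yields $\Phi(f_{_1}\vee f_{_2})=\Phi(f_{_1})\vee\Phi(f_{_2})$, $\Phi(f_{_1}\wedge f_{_2})=\Phi(f_{_1})\wedge\Phi(f_{_2})$ and $\Phi(\alpha f+\beta)=\alpha\Phi(f)+\beta$. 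Moreover, since $p$ is onto, $f_{_1}\precsim f_{_2}$ pointwise on $X$ if and only if $\widetilde{f_{_1}}\precsim\widetilde{f_{_2}}$ pointwise on $X\!/\!_{_\approx}$, so the pointwise order is reflected as well. Together these show that $\Phi$ is an isomorphism of generalized cone lattices, which is the strongest reasonable reading of the word \emph{isomorphic} in the statement.

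The genuine content of the argument is concentrated entirely in Proposition \ref{a1me}; once the factorization $f=\widetilde{f}\circ p$, its continuity and its uniqueness are in hand, everything above is bookkeeping driven by surjectivity of $p$. Accordingly, the only point that requires care, and the step I expect to be the main (if modest) obstacle, is making sure that the \emph{same} class of morphisms is used on both sides, namely that a Scott and $\omega$-continuous Richter-Peleg representation on $\widetilde{\mathfrak{X}}$ pulls back along the quotient map to one of the correct type on $\mathfrak{X}$, and conversely. This is exactly what Proposition \ref{a1me} certifies, using that $p$ is a quotient map so that $\widetilde{f}^{-1}(A)$ is open precisely when $p^{-1}(\widetilde{f}^{-1}(A))=f^{-1}(A)$ is open.
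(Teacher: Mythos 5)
Your proof is correct and takes essentially the same route as the paper: the paper presents the corollary as an immediate consequence of Proposition \ref{a1me} (together with Propositions \ref{tame}, \ref{tfte}, Remark \ref{olymb} and Theorem \ref{kjh}), and your map $\Phi(f)=\widetilde{f}$ with inverse $\Psi(\widetilde{f})=\widetilde{f}\circ p$ is exactly the bijective correspondence that Proposition \ref{a1me} supplies, with mutual inversion following from its uniqueness clause just as you argue. Your further check that $\Phi$ respects the pointwise lattice and cone operations is harmless additional bookkeeping beyond what the paper records, and does not change the substance of the argument.
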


The following lemma is a result of Ern\'{e} in \cite{ern2}.

\begin{lemma}\label{aeks}{\rm Let $\mathcal{P}=(X,\precsim)$ be a poset. 
Then, the normal completion of $\mathcal{P}$ is a continuous lattice
if and only if $\mathcal{P}$
is precontinuous.
}
\end{lemma}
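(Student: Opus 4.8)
The plan is to carry everything over to the MacNeille completion $L=(\delta(X),\precsim^{\delta})$, which is a complete lattice (as recalled in the excerpt), and to exploit that the embedding $x\mapsto \downarrow x$ makes $X$ both join-dense and meet-dense in $L$. I write $\le$ for the order $\precsim^{\delta}$ of $L$, which is just inclusion of cuts, so that $\bigvee_{L}\{d_{i}\}=(\bigcup_{i}d_{i})^{\delta}$ and in particular $A^{\delta}=\bigvee_{L}\{\downarrow a\mid a\in A\}$ for every $A\subseteq X$; these are the standard density facts for the normal completion. Continuity of a complete lattice is the statement that each element is the supremum of the elements way-below it, whereas precontinuity of $\mathcal{P}$ is the analogous statement phrased with $\ll_{e}$ and the operator $(\cdot)^{\delta}$. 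The whole proof therefore reduces to matching Ern\'e's relation $\ll_{e}$ on $X$ with the lattice way-below relation $\ll$ of $L$, and then transporting the approximation property between $X$ and $L$ by density.

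The technical heart is a dictionary between the Frink ideals of $\mathcal{P}$ and the directed suprema of $L$. In one direction, given $I\in Fid(X,\precsim)$ I would consider the family $\mathcal{D}_{I}=\{Z^{\delta}\mid Z\subseteq I\ \textup{finite}\}$ of finite joins in $L$; it is directed, since the union of two finite sets is finite, each member satisfies $Z^{\delta}\subseteq I$ by the defining property of a Frink ideal, and $\bigvee_{L}\mathcal{D}_{I}=I^{\delta}$ because $\bigcup_{Z}Z^{\delta}=I$ (the inclusion $\subseteq$ is the Frink property and $\supseteq$ holds since $x\in\{x\}^{\delta}=\downarrow x$). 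In the other direction, for a directed $D\subseteq L$ I would set $I_{D}=\bigcup_{d\in D}d$, viewing each cut $d$ as a lower subset of $X$; directedness of $D$ together with the cut inequality $Z^{\delta}\le d$ gives $Z^{\delta}\subseteq I_{D}$ for every finite $Z\subseteq I_{D}$, so $I_{D}$ is a Frink ideal, and $\bigvee_{L}D=(\bigcup_{d}d)^{\delta}=I_{D}^{\delta}$. Thus the directed joins in $L$ are exactly the cuts $I^{\delta}$ of Frink ideals.

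Using this dictionary I would prove that for $x,y\in X$ one has $x\ll_{e}y$ in $\mathcal{P}$ if and only if $\downarrow x\ll \downarrow y$ in $L$. For the forward implication, if $\downarrow y\le \bigvee_{L}D$ for a directed $D$, pass to the Frink ideal $I_{D}$; then $y\in I_{D}^{\delta}$, so $x\in I_{D}$ by $x\ll_{e}y$, whence $\downarrow x\le d$ for some $d\in D$. For the converse, given $I\in Fid(X,\precsim)$ with $y\in I^{\delta}$, apply $\downarrow x\ll \downarrow y\le I^{\delta}=\bigvee_{L}\mathcal{D}_{I}$ to obtain a finite $Z\subseteq I$ with $\downarrow x\le Z^{\delta}$, so $x\in Z^{\delta}\subseteq I$. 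With this equivalence in hand, precontinuity of $\mathcal{P}$, namely $x\in\{y\mid y\ll_{e}x\}^{\delta}$ for all $x$, becomes, via join-density, the statement that $\downarrow x=\bigvee_{L}\{\downarrow y\mid \downarrow y\ll \downarrow x\}$ for every $x\in X$; the reverse inequality is automatic because $y\ll_{e}x$ forces $y\precsim x$.

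Finally I would transfer this principal-ideal approximation to all of $L$. Each $a\in L$ satisfies $a=\bigvee_{L}\{\downarrow x\mid \downarrow x\le a\}$ by join-density; substituting the approximation for each such $\downarrow x$ and using the two monotonicity laws of $\ll$, namely $u\le v\ll w\Rightarrow u\ll w$ and $v\ll w\le a\Rightarrow v\ll a$, yields $a=\bigvee_{L}\{\downarrow y\mid \downarrow y\ll a\}$, so $\{\downarrow x\mid x\in X\}$ is a basis of $L$ and $L$ is continuous; the converse runs symmetrically, replacing a general $b\ll \downarrow x$ by the principal ideals $\downarrow y\le b$, which are still way-below $\downarrow x$ by the first monotonicity law. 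The delicate and genuinely new step is the bridge of the second paragraph: continuity theory is built on directed sets, whereas Frink ideals need not be directed, and the whole point is that their images nonetheless generate exactly the directed suprema of the completion. Once that correspondence, and the resulting identification $\ll_{e}\leftrightarrow\ll$, are secured, everything else is a routine density-and-monotonicity argument, so I expect the \emph{bridge lemma} to be the main obstacle.
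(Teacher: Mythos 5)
Your proof is correct, but it cannot be compared with the paper's own argument for the simple reason that the paper does not give one: Lemma \ref{aeks} is stated there as a known result of Ern\'{e} \cite{ern2} and is used as a black box. What you have written is, in effect, a self-contained reconstruction of Ern\'{e}'s completion-invariance theorem, and the steps check out. The bridge lemma is the right pivot and is argued correctly: for a Frink ideal $I$ the family $\mathcal{D}_{I}=\{Z^{\delta}\mid Z\subseteq I\ \mathrm{finite}\}$ is directed with $\bigvee\mathcal{D}_{I}=I^{\delta}$ (since $\bigcup\mathcal{D}_{I}=I$), and conversely the union $I_{D}$ of a directed family of cuts is a Frink ideal with $\bigvee D=I_{D}^{\delta}$; this yields that $x\ll_{_e}y$ in $\mathcal{P}$ is equivalent to $\downarrow x\ll\downarrow y$ in $\delta(X)$, and the remaining transfer is join-density plus the two monotonicity laws of $\ll$, exactly as you describe. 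Two details deserve an explicit line when you write this up: the claim that $y\ll_{_e}x$ forces $y\precsim x$ requires observing that $\downarrow x$ is itself a Frink ideal (for finite $Z\subseteq\downarrow x$ one has $x\in Z^{\uparrow}$, hence $Z^{\delta}\subseteq\downarrow x$), so that the definition of $\ll_{_e}$ can be tested against $I=\downarrow x$ with $x\in I^{\delta}$; and the repeated passage between membership and order, namely $\downarrow y\subseteq C$ iff $y\in C$ for a cut $C$, uses that cuts are lower sets. As for what each approach buys: the paper's citation keeps the section short and defers all order-theoretic work to \cite{ern2}, whereas your argument makes verifiable, inside the paper, the key equivalence on which the final Theorem \ref{gjh} entirely rests, at the cost of about a page of cut computations.
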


The following lemma is Theorem 1-3.12 in \cite{GW}.

\begin{lemma}\label{GW}{\rm A complete lattice $\mathcal{L}=(X,\precsim)$ is continuous if and only if there is an injection of $\mathcal{L}$ into some power $[0,1]^I$ of the unit interval preserving arbitrary meets and directed joins.}
\end{lemma}

\begin{definition}\label{cgs}{\rm 
Let $\{f_{_a}\vert a\in A\}$ be a collection of functions.
Then, the {\it evaluation map} $e: X \longrightarrow \displaystyle\prod_{a\in A}f_{_a}$
induced by the collection $\{f_{_a}\vert a\in A\}$
is defined as follows: for each $x\in X$, $e(x)=(f(x))_{_{a}}$.
That is, for each $x\in X$, $e(x)$ is the point in $\displaystyle\prod_{a\in A}f_{_a}$
whose ath coordinate is $f_{_a}(x)$ for each $a\in A$.}
\end{definition}

\begin{theorem}\label{gjh}{\rm Let $\mathcal{L}=(X,\precsim)$ be a preordered set.
Then,  $\precsim$ has the set of Scott and $\omega$-continuous
functions from $X$ into the real unit interval $[0,1]$ 
as a Richter-Peleg 
multi-utility representation if and only if $\precsim$ is precontinuous.}
\end{theorem}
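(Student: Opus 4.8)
The plan is to transfer the whole statement to the setting of a complete lattice, where Theorem \ref{kjh} and the completion machinery (Lemmas \ref{aeks} and \ref{kale}) are available, the bridge being the Dedekind--MacNeille completion. The first step is to reduce from a preorder to a partial order. By Proposition \ref{tfte}, $\precsim$ is precontinuous if and only if $\precsim^{\ast}$ is precontinuous, while Corollary \ref{lant} (together with the explicit correspondence of Proposition \ref{a1me}, which preserves Scott and $\omega$-continuity as well as the Richter--Peleg property) puts the Scott and $\omega$-continuous Richter--Peleg representations of $\precsim$ on $X$ in order-isomorphic correspondence with those of $\precsim^{\ast}$ on $X\!/\!_{_\approx}$. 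Hence it suffices to prove the equivalence when $(X,\precsim)$ is a poset, and I would assume this from now on.

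For sufficiency, assume $\precsim$ is precontinuous. By Lemma \ref{aeks} the completion $(\delta(X),\precsim^{\delta})$ is a continuous lattice, so Theorem \ref{kjh} gives that the Scott and $\omega$-continuous functions on $\delta(X)$ form a Richter--Peleg multi-utility representation of $\precsim^{\delta}$. The order-embedding $\varphi^{\delta}\colon x\mapsto\,\downarrow x$ preserves every existing directed join and every existing meet, so for any Scott and $\omega$-continuous $F$ on $\delta(X)$ the composite $F\circ\varphi^{\delta}$ is Scott and $\omega$-continuous on $X$. Since $\varphi^{\delta}$ is an order-embedding that reflects the strict order, the family of these composites order-reflects $\precsim$ and each member is still a Richter--Peleg utility; enlarging to the set of \emph{all} Scott and $\omega$-continuous functions preserves order-reflection, so that set is a Richter--Peleg multi-utility representation of $\precsim$.

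For necessity, suppose the Scott and $\omega$-continuous functions on $X$ form a Richter--Peleg multi-utility representation; I would show that $(\delta(X),\precsim^{\delta})$ is continuous and then conclude precontinuity by Lemma \ref{aeks}. The idea is to extend each Scott and $\omega$-continuous $f\colon X\to[0,1]$ to the completion by $\widehat{f}(A)=\sup\{f(x)\mid x\in A\}$ on a cut $A=A^{\delta}$, to verify that $\widehat{f}$ is Scott and $\omega$-continuous on $\delta(X)$ (equivalently, preserves directed joins and arbitrary meets), and to note that, because $\varphi^{\delta}$ is an embedding and the original family order-reflects $\precsim$, the extended family order-reflects $\precsim^{\delta}$. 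A separating family of meet- and directed-join-preserving maps into $[0,1]$ on a complete lattice forces continuity; feeding this through Lemma \ref{kale} then yields that $\delta(X)$ is a continuous lattice.

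The main obstacle I expect is precisely the last part of the necessity argument. Directed-join preservation of $\widehat{f}$ is immediate from the definition, but arbitrary-meet preservation is delicate and is exactly the point at which the $\omega$-continuity of $f$ (not merely its Scott-continuity) must be exploited, since the naive sup-extension does not respect intersections of cuts automatically. The second delicate point is matching the \emph{family} of order-reflecting functions against the \emph{single} injection into $[0,1]$ demanded by Lemma \ref{kale}: if one cannot produce a single separating function directly, the remedy is to isolate a countable separating subfamily and amalgamate it into one meet- and directed-join-preserving injection $\delta(X)\hookrightarrow[0,1]$ before invoking Lemma \ref{kale}.
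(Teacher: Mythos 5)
Your overall architecture coincides with the paper's: reduce to a poset via the quotient (Propositions \ref{tfte}, \ref{a1me} and Corollary \ref{lant}), then for sufficiency pass to the MacNeille completion, invoke Lemma \ref{aeks} and Theorem \ref{kjh}, and pull the representation back along $\varphi^{\delta}$ and the quotient map. The sufficiency half of your proposal is essentially the paper's argument, at the same level of detail, and I have no objection to it.

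The gap is in the necessity half, and it is exactly the two points you flag but do not close. You need a \emph{single} injection of $\delta(X)$ into $[0,1]$ preserving arbitrary meets and directed joins before Lemma \ref{kale} applies, and your proposed remedy --- isolate a countable separating subfamily and ``amalgamate'' it into one function --- fails for the very reason you yourself identify as delicate: meet-preservation is destroyed by the natural amalgamations. For a weighted sum $F=\sum_n 2^{-n}\widehat{f}_n$ one only gets $F\bigl(\bigwedge A\bigr)=\sum_n 2^{-n}\inf_{a\in A}\widehat{f}_n(a)\le \inf_{a\in A}F(a)$, with strict inequality whenever different $\widehat{f}_n$ attain their infima over $A$ at different points, so $F$ need not preserve meets even though every $\widehat{f}_n$ does (and nothing guarantees a countable separating subfamily exists without a separability hypothesis). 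The paper's device at this point is different and disposes of your first worry automatically: it forms the pointwise infimum $\mathfrak{f}=\bigwedge\{\widetilde{f}\mid \widetilde{f}\in\widetilde{\mathcal{V}}\}$ over the \emph{whole} extended family, and since infima commute, $\mathfrak{f}$ preserves arbitrary meets whenever each $\widetilde{f}$ does. What the paper then asserts --- that $x\precsim^{\delta}y$ if and only if $\mathfrak{f}(x)\le\mathfrak{f}(y)$, so that $\mathfrak{f}$ is injective and order-reflecting, and that directed joins survive the infimum --- is the actual crux of the necessity direction, and neither your summation sketch nor the unproved claim that ``a separating family of meet- and directed-join-preserving maps forces continuity'' substitutes for it. As written, your necessity argument is a plan with the decisive step missing rather than a proof.
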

\begin{proof} 
Let $X_{_\approx}$ be the quotient space of $X$. Since 
$(X,\precsim)$ is precontinuous,  by Proposition \ref{tfte}, we have that 
$(X_{_\approx},\sqsubseteq)$ is precontinuous.
Let
$(\delta(X_{_\approx}), \sqsubseteq^{\delta})$
be the MacNeille completion of $(X_{_\approx},\sqsubseteq)$. 
By Lemma \ref{aeks},
$(\delta(X_{_\approx}), \sqsubseteq^{\delta})$ is a continuous lattice. 
Let 
\begin{center}
$\mathcal{V}=\{g: \delta(X_{_\approx})\to [0,1]\vert $
$g$
is a
Scott and $\omega$-continuous Richter-Peleg representation of
$\sqsubseteq^{\delta}\}$.
\end{center}
By Theorem \ref{kjh},
we have that $\mathcal{V}$ is a Richter-Peleg multi-utility representation of
$\sqsubseteq^{\delta}$. Let $\varphi^{\delta}$ be the usual embedding of  
$(X_{_\approx},\sqsubseteq)$ to
$(\delta(X_{_\approx}), \sqsubseteq^{\delta})$.
Clearly, $\varphi^{\delta}$ preserves all existing joins and meets (see \cite[Theorem 1]{abi}).
 Then, for each $g\in \mathcal{V}$, $g\circ \varphi^{\delta}$
 is a Scott and 
 $\omega$-continuous Richter-Peleg function of $X_{_\approx}$ to $[0,1]$.
 Let
 \begin{center}
 $\widetilde{\mathcal{V}}=\{\widetilde{f}=g\circ \varphi^{\delta}\vert g\in \mathcal{V}\}$.
  \end{center}
It follows that $\widetilde{\mathcal{V}}$ is a Richter-Peleg multi-utility representation of
$\sqsubseteq$. 
Define,
 \begin{center}
 $\widehat{\mathcal{V}}=\{h=\widetilde{f}\circ \pi\vert \widetilde{f}\in
 \widetilde{\mathcal{V}}\}$, where $\pi$ is the projection map of $X$ to $X_{_\approx}$\}
 \end{center}
If $x, y\in X$, then
\begin{center}
$x\sim y\Rightarrow h(x)=h(y)$ and $x\prec y\Rightarrow h(x)<h(y)$.
\end{center}
 Then, by Proposition \ref{a1me}, we have that $h$
is a 
Scott and 
$\omega$-continuous function of $(X,\precsim)$ to $[0, 1]$.

Hence, $h$ is a Richter-Peleg utility representation of
$(X,\precsim)$.
It follows that
$\widehat{\mathcal{V}}$ is a Richter-Peleg multi-utility representation of
$(X,\precsim)$.

Conversely, suppose that $\mathcal{V}$ is a Richter-Peleg multi-utility representation of $\precsim$ such that each $f\in \mathcal{V}$ is a Scott and $\omega$-open
continuous function
from $X$ into the real unit interval $[0,1]$. 
For every $f\in \mathcal{V}$ we define the  preorder $\precsim_{_f}$ such that
\begin{center}
$\precsim_{_f}=\{(x,y)\in X\times X\vert f(x)\leq f(y)\}$.
\end{center}
Then,
the monotonicity and Scott and $\omega$-continuity
of $f$ implies that $\precsim_{_f}$ is a complete binary relation and
for every point $x\in X$, the set $\{y\in X\vert  y\precsim_{_f} x\}$ is $\sigma$-closed and
the set $\{z\in X\vert  x\precsim_{_f} z\}$
is $\omega$-closed in $X$.
It, thus, follows that $\precsim=\displaystyle\bigcap_{f\in \mathcal{V}}\precsim_{_f}$
is $\sigma\times\omega$-closed preorder and therefore $(X,\sigma,\omega,\precsim)$ 
is a bitopological ordered space.
As it is known, in $[0, 1]$, $x\ll y$, if and only if $x<y$ or $x=y=0$. Thus, $([0, 1],\ll)$ is a continuous lattice and thus
$\ll$ is the specialization order of its Scott topology.
Therefore,
$\mathbb{I}=([0,1],\mathfrak{L},\mathfrak{U})$, where $\mathfrak{L}, \mathfrak{U}$ are the lower and upper
topologies corresponds to the unit interval for bitopological ordered spaces (see Example \ref{ex0}).
For each $x\in X$, let
$h(x)=\{y\vert y\in X\ {\rm and}\ y\precsim x\}$. Then, the intersection of any $\{h(x)\vert x\in X\}$ is a lower cut in $X$
(see \cite[Definition 1]{abi}).
By \cite[Theorem 1]{abi}, $h(x)$ is a one -to- one mapping from $(X,\precsim)$
into $(\delta(X), \precsim^{\delta})$ which
extends $\precsim$ and preserves arbitrary meets and directed  joins.
 Define
\begin{center}
$\widehat{f}: \delta(X)\longrightarrow \mathbb{I}$ by $\widehat{f}(x)=\displaystyle\inf_{y\in x^{^\uparrow}}f(y)$.
\end{center}
Since $\precsim^{\delta}$ extends $\precsim$, by monotonicity of $f$ (it is Scott continuous), for each $y\in x^{^\uparrow}$
we have that $x\precsim y$. Therefore,
\begin{center}
$x \precsim^{\delta} y$ if and only if $\widehat{f}(x)\leq \widehat{f}(y)$.
\end{center}
The subset $[0,1]$ of reals is complete with respect to upper and lower topologies, thus by remark \ref{opa1}
we concludes that
$h$
is an
injection of $\delta(X)$ into $\mathbb{I}$ which preserves arbitrary meets and directed  joins.
On the other hand, if $x\in X$, then $\widehat{f}(x^{^\uparrow})=f(x)$. It follows that $\widehat{f}\circ h=f$.
For the sake of clarity, we'll assume that the above-mentioned family $\mathcal{V}$ satisfies the relation
$\mathcal{V}=\{f_{_a}\vert a\in A\}$
in the next section of the proof.
Let
\begin{center}
$\widehat{\mathcal{V}}=\{\widehat{f}_{_a}\in \delta(X)\vert \widehat{f}_{_a}\circ h=f_{_a}\}$. 
\end{center}
We correspond with
each $\widehat{f}_{_a}\in \widehat{\mathcal{V}}$ the set $\widehat{f}_{_a}(X)=I_{_{\widehat{f}_{_a}}} \subseteq [0,1]$ and denote
$\mathbb{I}_{_{f_{_a}}}=(I_{_{\widehat{f}_{_a}}},\mathfrak{L},\mathfrak{U})=(I_{_{\widehat{f}_{_a}}},\sigma,\omega)$ (remark \ref{opa1}).
Define 
the evaluation map 
\begin{center}
$e: \delta(X) \longrightarrow \displaystyle\prod_{\widehat{f}_{_a}\in \widehat{V}}\mathbb{I}_{_{f_{_a}}}$
\end{center}
induced by the collection $\{\widehat{f}_{_a}\vert a\in A\}$.
Then, for each $x\in X$, $e(x)$ is the point in 
$\displaystyle\prod_{\widehat{f}_{_a}\in \widehat{V}}\mathbb{I}_{_{f_{_a}}}$
whose ath coordinate is $\widehat{f}_{_a}(x)$ for each $a\in A$.
Since each $\widehat{f}_{_a}$ has a range contained in some closed and bounded interval $I_{_{\widehat{f}_{_a}}}$, by Theorem 8.16 in \cite{wil}, $\delta(X)$ is homeomorphic to a subspace of the cube $\displaystyle\prod_{\widehat{f}_{_a}\in \widetilde{V}}I_{_{\widehat{f}_{_a}}}$. Since $\precsim$ is $\sigma\times \omega$-closed, it separates points from closed sets and 
since the
Lawson topology $\lambda=\sigma\vee \omega$ in $\delta(X)$ is $T_1$, by Theorem 8.16 in \cite{wil}, we have that the evaluation map
$e: \delta(X) \longrightarrow \displaystyle\prod_{\widehat{f}_{_a}\in \widehat{V}}I_{_{\widehat{f}_{_a}}}$ is an injection.
On the other hand, since each $\widehat{f}_{_a}$ preserves arbitrary meets and directed  joins so does $e$.
By Lemma \ref{GW} we conclude that the normal completion $(\delta(X),\precsim^{\delta})$ is a continuous lattice.
 Therefore, by Lemma
\ref{aeks} we conclude that $(X,\precsim)$ is precontinuous.
\end{proof}

\par\bigskip\smallskip\par\noindent

\par\noindent
{\it Address}: {\tt {Athanasios Andrikopoulos} \\ {Department of Computer Engineering \& Informatics\\ University of Patras\\ Greece}
\par\noindent
{\it E-mail address}:{\tt aandriko@ceid.upatras.gr}

\end{document}